\def\rar{\rightarrow}
\definecolor{red}{rgb}{1.00,0.00,0.00}
\newtheorem{theorem}{Theorem}[section]
\newtheorem{lemma}[theorem]{Lemma}
\newtheorem{corollary}[theorem]{Corollary}
\newtheorem{proposition}[theorem]{Proposition}
\newtheorem{example}[theorem]{Example}
\newtheorem*{remark}{Remark}
\newtheorem{definition}[theorem]{Definition}
\newtheorem{question}{Question}
\newtheorem*{notations}{Notations}
\def\sA{\langle A\rangle}
\def\sB{\langle B\rangle}
\def\sC{\langle C\rangle}
\def\sD{\langle D\rangle}
\def\sS{\langle S\rangle}
\def\rA{k[A]}
\def\rB{k[B]}
\def\rC{k[C]}
 \def\rS{k[S]}
\def\N{\mathbb{N}}
\def\Z{\mathbb{Z}}
\def\a{{\bf a}}
\def\b{{\bf b}}
\def\iff{if and only if}
\def\rk#1{\hbox{\rm rank\,(#1)}}
\newcommand{\pd}{\mathrm {pd }}
\newcommand{\depth}{\mathrm {depth }}
\title[Gluing semigroups]{Gluing semigroups - when and how}
\author{Philippe Gimenez}
 \address{Instituto de Matem\'aticas de la Universidad de Valladolid (IMUVA),
Facultad de Ciencias, 47011 Valladolid, Spain.}
 \email{pgimenez@agt.uva.es}
 \thanks{The first author was partially supported by 
{\it Ministerio de Ciencia e Innovaci\'on} (Spain) MTM2016-78881-P and 
{\it Consejer\'{\i}a de Educaci\'on de la Junta de Castilla y Le\'on} VA128G18.}
\author{Hema Srinivasan}
 \address{Mathematics Department, University of
Missouri, Columbia, MO 65211, USA.}
 \email{SrinivasanH@math.missouri.edu}
\thanks{The second author acknowleges with pleasure the support and hospitality of University of Valladolid while working on this project.}
\thanks{{\bf Keywords}: semigroup rings, gluing, degenerate semigroups, Cohen-Macaulay rings. }
\begin{document}
\maketitle

\begin{abstract} 
Given two semigroups $\sA$ and $\sB$ in $\N^n$, we wonder when they can be glued, i.e., when there exists a semigroup $\sC$ in $\N^n$ such that the defining ideals of the
corresponding semigroup rings satisfy that $I_C=I_A+I_B+\langle\rho\rangle$ for some binomial $\rho$. 
If $n\geq 2$ and  $\rA$ and $\rB$ are Cohen-Macaulay,  we prove that in order to glue them, one of  the two semigroups must be degenerate.  Then we study the two most degenerate cases: when one of the semigroups is generated by one single element (simple split) and the case
where it is generated by at least two elements and all the elements of the semigroup lie on a line. 
In both cases we characterize the semigroups that can be glued and say how to glue them.   Further,  in these cases, we conclude that the glued $\sC$ is Cohen-Macaulay if and only if both $\sA$ and $\sB$ are also Cohen-Macaulay.  
As an application,  we characterize precisely the Cohen-Macaulay semigroups that can be glued when $n=2$.
\end{abstract}

\section{Introduction}

Let $\sA$ be the semigroup finitely generated by a subset $A=\{\a_1,\ldots,\a_p\}$ of $\N^n$ and $k$ an arbitrary field.
If $\phi_A: k[x_1, \dots, x_p] \to k[t_1,\ldots, t_n]$ is the ring homomorphism given by 
$\phi_A(x_j) = t^{\a_j}=\prod_{i=1}^{n}t_i^{a_{ij}}$ where $\a_j=\begin{pmatrix} a_{1j}\\ \vdots\\ a_{nj}\end{pmatrix}\in\N^n$,
the kernel of $\phi_A$, $I_A=\ker(\phi_A)$, is a binomial prime ideal and the semigroup ring $\rA$ is isomorphic to $k[x_1, \dots, x_p]/I_A$.
We will also denote by $A$ the $n\times p$ integer matrix whose columns are the elements in $A$.

\medskip
Inspired by the classical construction by Delorme in \cite{De} for the study and characterization
of complete intersection numerical semigroups, Rosales introduced  in \cite{rosales} the concept of gluing. For a semigroup $\sC$, 
when the set of generators of the semigroup splits into two disjoint parts, $C=A\cup B$, such that 
by $I_C=I_A+ I_B +\langle\rho\rangle$ where $\rho$ is a binomial whose first, respectively second, monomial involves only variables corresponding to elements
in $A$, respectively $B$, we say that $\sC$ is a {\it gluing} of $\sA$ and $\sB$.
When this occurs, we also say that
the semigroup $\sC$ is {\it decomposable} or that it  {\it splits} (or {\it decomposes}) as $\sC=\sA\sqcup \sB$.
This property can be characterized in terms of the semigroups $\sA$ and $\sB$ and the subgroups in $\Z^n$ associated to them; see
\cite[Thm. 1.4]{rosales}.

\medskip
Let's fix some notations that we will use along the paper.
If we have two semigroups $\sA$ and $\sB$ in $\N^n$ with  $A=\{\a_1,\ldots,\a_p\}$ and $B=\{\b_1,\ldots,\b_q\}$, variables corresponding 
to $A$, respectively $B$, will be denoted by $x_1,\ldots,x_p$, respectively $y_1,\ldots,y_q$.
Thus, $I_A\subset k[x_1,\ldots,x_p]$, $\rA\simeq k[x_1,\ldots,x_p]/I_A$,
$I_B\subset k[y_1,\ldots,y_q]$ and $\rB\simeq k[y_1,\ldots,y_q]/I_B$.
If the generating set $C$ of a semigroup $\sC$ splits into two disjoint parts $C=A\cup B$,
then $I_C\subset R=k[x_1,\ldots,x_p,y_1,\ldots,y_q]$ and $\rC\simeq R/I_C$. 
Since multiplying by a common integer all the elements in the generating set of a semigroup does not change the defining ideal of the semigroup ring,
one can easily check that if $C=k_1A\cup k_2B$ for some nonnegative integers $k_1$ and $k_2$, then $I_C\cap k[x_1,\ldots,x_p]=I_A(=I_{k_1A})$ and $I_C\cap k[y_1,\ldots,y_q]=I_B(=I_{k_2B})$.
Note that if one gives weight $k_1\a_i$ to $x_i$ and
$k_2\b_j$ to $y_j$ for all $i,j$, then the ring $\rC$ is graded over the semigroup $\sC$.

\medskip
In this paper, we are interested in studying when two semigroups $\sA$ and $\sB$ in $\N^n$ can be glued in the following sense:

\begin{definition}{\rm
Given an interger $n\geq 1$ and two subsets $A=\{\a_1,\ldots,\a_p\}$ and $B=\{\b_1,\ldots,\b_q\}$ in $\N^n$, we say that the
semigroups $\sA$ and $\sB$ {\it can be glued} if there exist two integers $k_1,k_2\in\N$ such that for
$C=k_1A\cup k_2B$, the semigroup $\sC$ is a gluing of $\langle k_1 A\rangle$ and $\langle k_2 B\rangle$, i.e., 
$I_C=I_A+ I_B +\langle\rho\rangle$ for some binomial $\rho=\underbar{x}^\alpha-\underbar{y}^\beta$ with $\alpha\in\N^p$ and $\beta\in\N^q$.
When this occurs, we will say that $\sC$ is a gluing of $\sA$ and $\sB$ instead of saying that it is a gluing of 
$\langle k_1 A\rangle$ and $\langle k_2 B\rangle$.
}\end{definition}

\begin{remark}{\rm
In the definition of gluing, one can always assume that $k_1$ and $k_2$ are relatively prime, if needed.    
}\end{remark}

We state the following problems:

\begin{question}[When and how]\label{mainquest}{\rm
Given two semigroups $\sA$ and $\sB$ in $\N^n$ can $\sA$ and $\sB$ be glued?  When it is possible to glue them, what should the integers $k_1$ and $k_2$ be 
so that for $C=k_1A\cup k_2B$, $\sC$ is a gluing of $\sA$ and $\sB$.}
\end{question}

\begin{question}[The Cohen-Macaulay property]\label{weakerquest}{\rm
Given two semigroups $\sA$ and $\sB$ in $\N^n$, such that $\rA$ and $\rB$ are Cohen-Macaulay,   can $\sA$ and $\sB$ be glued?  When the answer is positive, is the resulting glued semigroup ring $\rC$ also Cohen-Macaulay? } \end{question}

The case of numerical semigroups is well understood.
Recall that if $n=1$, every semigroup ring is Cohen-Macaulay.
Moreover, it is well known that given two arbitrary numerical semigroups $\sA$ and $\sB$,
if one chooses $k_1\in\sB$ and $k_2\in\sA$,
then for $C=k_1A\cup k_2B$, one has that $I_C=I_A+I_B+\langle\rho\rangle$ for some binomial $\rho=\underbar{x}^\alpha-\underbar{y}^\beta$
with $\alpha\in\N^p$ and $\beta\in\N^q$. 
One can thus answer to the above questions when $n=1$: two numerical semigroups can always be glued and one knows how to glue them (choosing
$k_1\in\sB$ and $k_2\in\sA$). Moreover, if $\sC$ is a gluing of $\sA$ and $\sB$,
the semigroup rings $\rA$, $\rB$ and $\rC$ are always Cohen-Macaulay in this case.
There exists no similar construction when $n\geq 2$. 

\medskip
In section \ref{secProjDim}, we partially answer to question \ref{weakerquest} and  show that if the rings $\rA$ and $\rB$ have dimension $n$,
i.e., $\sA$ and $\sB$ are nondegenerate, and both rings 
are Cohen-Macaulay, then the semigroups $\sA$ and $\sB$ can not be glued if $n\geq 2$ (theorem \ref{thmNondegen}). Degeneracy is thus necessary
in order to glue Cohen-Macaulay semigroups when $n\geq 2$. 
In section \ref{secSimplesplit}, we focus on the case of a simple split, i.e., when $B$ has only one element.
By definition, if $q=1$ then $\sB$ is degenerate whenever $n\geq 2$ and we  give complete answers to both questions in this case (theorems \ref{thmSimplesplit}
and \ref{thmGlue1elt} and corollary \ref{corCMchar1elt}).
In section \ref{secLine}, we consider another degenerate case: when all the generators of $\sB$ lie on a line, i.e.,
when $q\geq 2$ and the matrix $B$ has rank 1 (theorem \ref{thmLineChar} and corollary \ref{corCMcharLine}).
Putting all together, we then give a complete answer to question \ref{weakerquest} when $n=2$ in section \ref{secConsequences}.

\section{Degeneracy}\label{secProjDim}

It is well-known that given $A=\{\a_1,\ldots,\a_p\}$ in $\N^n$, the Krull dimension of the semigroup ring $\rA$ coincides with the rank of the $n\times p$
integer matrix $A$ whose columns are $\a_1,\ldots,\a_p$; see, e.g., \cite[Lem. 4.2]{sturm}. 
In particular, $\dim\rA\leq n$ and we will say that $\sA$ is {\it nondegenerate} if the dimension of $\rA$ is $n$.
Note that if we don't have enough generators, i.e., if $p<n$, then $\sA$ is always degenerate, and if $p\geq n$, $\sA$ is nondegenerate \iff{}
the matrix $A$ has maximal rank.

\begin{remark}\label{maximalrank} {\rm
If the $n\times p$ matrix $A$ is not of rank $n$ then, reordering eventually the rows of the matrix,
there are rational numbers $r_i$ such that $a_{nj} = \sum_{i=1}^{n-1} r_ia_{ij}$.  Hence, there is a positive integer $d$ such that, if $A'$ is the $n-1\times p$ matrix of the first $n-1$ rows of $dA$, then the semigroup rings $k[A']$ and $\rA$ are isomorphic.  
}\end{remark}

In \cite{jpaa19}, we discribed the minimal graded free resolution of $\rC$ in terms of those of $\rA$ and $\rB$ when
$\sC$ is a gluing of $\sA$ and $\sB$. Let's recall here our main result:

\begin{theorem}[{\cite[Thm. 6.1, Cor. 6.2]{jpaa19}}]\label{thmJpaa}
Let $A=\{\a_1,\ldots,\a_p\}$ and $B=\{\b_1,\ldots,\b_q\}$ be two finite subsets of $\N^n$ and assume that
$\sC$ is a gluing of $\sA$ and $\sB$, i.e., $C=A\cup B$ and $I_C=I_A+I_B+\langle \rho\rangle$ for some $\rho=\underbar{x}^\alpha-\underbar{y}^\beta$ 
with $\alpha\in\N^p$ and $\beta\in\N^q$. Consider $F_A$ and $F_B$, minimal graded free resolutions of $\rA$ and $\rB$.
\begin{enumerate}
\item
A minimal graded free resolution of $\rC$ can be obtained as the mapping cone of $\rho:F_A\otimes F_B\rar F_A\otimes F_B$ where
$\rho$ is induced by multiplication by $\rho$.
\item\label{betti}
The Betti numbers of $\rA$, $\rB$ and $\rC$ are related as follows: $\forall i\geq 0$,
$$
\beta_i(\rC) =
\sum _{i'=0}^i \beta_{i'}(\rA)[\beta_{i-i'}(\rB)+\beta_{i-i'-1}(\rB)] =
\sum _{i'=0}^i \beta_{i'}(\rB)[\beta_{i-i'}(\rA)+\beta_{i-i'-1}(\rA)]\,.
$$
\item\label{projdim}
The relation between the projective dimensions of $\rA$, $\rB$ and $\rC$ is:
$$
\pd(\rC)=\pd(\rA)+\pd(\rB)+1\,.
$$
\end{enumerate}
\end{theorem}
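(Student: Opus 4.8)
The plan is to realize $\rC$ as a hypersurface section of the tensor product $\rA\otimes_k\rB$ and to resolve it by combining a K\"unneth argument with a mapping cone.

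Set $S_A=k[x_1,\dots,x_p]$ and $S_B=k[y_1,\dots,y_q]$, so that $R=S_A\otimes_kS_B$ and $R/(I_A+I_B)\simeq\rA\otimes_k\rB$; since $I_C=(I_A+I_B)+\langle\rho\rangle$ we have $\rC\simeq(\rA\otimes_k\rB)/(\bar\rho)$, where $\bar\rho$ is the image of $\rho$. The first step is that $\bar\rho$ is a nonzerodivisor on $\rA\otimes_k\rB$. Indeed $\rA\otimes_k\rB$ is a domain: the defining injections $\rA\hookrightarrow k[t_1^{\pm1},\dots,t_n^{\pm1}]$ and $\rB\hookrightarrow k[u_1^{\pm1},\dots,u_n^{\pm1}]$ into Laurent rings in two \emph{disjoint} sets of variables stay injective after applying $-\otimes_k-$, since $k$ is a field, so $\rA\otimes_k\rB$ embeds in $k[t^{\pm1},u^{\pm1}]$, a domain; and $\bar\rho$ is sent to $t^{\,k_1\sum_i\alpha_i\a_i}-u^{\,k_2\sum_j\beta_j\b_j}$, a difference of two distinct monomials (one purely in the $t$'s, one purely in the $u$'s, both non-constant since a gluing binomial has $\alpha\neq 0$ and $\beta\neq 0$), hence nonzero. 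Thus there is a short exact sequence of graded $R$-modules
\[
0\lar(\rA\otimes_k\rB)(-\delta)\stackrel{\bar\rho}{\lar}\rA\otimes_k\rB\lar\rC\lar 0,\qquad\delta=\deg\rho.
\]

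Next, $F_A\otimes_kF_B$, whose term in homological degree $i$ is the free $R$-module $\bigoplus_{i'+i''=i}(F_A)_{i'}\otimes_k(F_B)_{i''}$, is a minimal graded free resolution of $\rA\otimes_k\rB$: exactness is the K\"unneth formula over the field $k$ (there are no Tor's, $H_0\otimes_kH_0=\rA\otimes_k\rB$, and the higher homology vanishes), and minimality holds because the differential $\pa^{F_A}\otimes 1\pm 1\otimes\pa^{F_B}$ has all its entries in $(x_1,\dots,x_p)+(y_1,\dots,y_q)=\m_R$. Multiplication by $\rho$ is then a chain endomorphism of $F_A\otimes_kF_B$ (of degree $\delta$) lifting the injection $\bar\rho$ in the sequence above, so its mapping cone is a graded free resolution of the cokernel $\rC$; and the cone is again minimal, since its differential combines the minimal differentials of $F_A\otimes_kF_B$ with multiplication by $\rho=\underbar{x}^\alpha-\underbar{y}^\beta\in\m_R$. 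This proves (1).

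Parts (2) and (3) are then rank bookkeeping. Counting ranks in $F_A\otimes_kF_B$ gives $\beta_i(\rA\otimes_k\rB)=\sum_{i'=0}^i\beta_{i'}(\rA)\beta_{i-i'}(\rB)$, and minimality forces $\pd(\rA\otimes_k\rB)=\pd(\rA)+\pd(\rB)$, the top term $(F_A)_{\pd(\rA)}\otimes_k(F_B)_{\pd(\rB)}$ being nonzero. The minimal mapping cone has $i$-th term $(F_A\otimes_kF_B)_i\oplus(F_A\otimes_kF_B)_{i-1}(-\delta)$, whence $\beta_i(\rC)=\beta_i(\rA\otimes_k\rB)+\beta_{i-1}(\rA\otimes_k\rB)$; substituting the K\"unneth expression and regrouping the summations by either index yields both displayed formulas in (2), while reading off the top nonzero term of the cone gives $\pd(\rC)=\pd(\rA\otimes_k\rB)+1$, i.e.\ (3). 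The one genuinely substantive point is the nonzerodivisor claim in the first paragraph — that passing to two disjoint sets of Laurent variables keeps $\rA\otimes_k\rB$ a domain and keeps $\bar\rho\neq 0$; once that is in place, everything else is formal homological algebra.
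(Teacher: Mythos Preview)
The paper does not itself prove this theorem; it is quoted from \cite{jpaa19} without proof. Your argument is correct and follows exactly the strategy that part~(1) of the statement already announces: resolve $\rA\otimes_k\rB$ by $F_A\otimes_kF_B$ via K\"unneth over the field $k$, check minimality, and take the mapping cone of multiplication by $\rho$. The one substantive step---that $\bar\rho$ is a nonzerodivisor on $\rA\otimes_k\rB$---you handle cleanly by embedding each factor into Laurent polynomial rings in \emph{disjoint} variable sets, so that the tensor product sits inside a single Laurent ring and is therefore a domain regardless of whether $k$ is algebraically closed.
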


Using the last part of the previous result, one can easily show that the only nondegenerate semigroups whose semigroup ring is Cohen-Macaulay that
can be glued are the numerical semigroups.

\begin{theorem}\label{thmNondegen}
Let $\sA$ and $\sB$ be two nondegenerate semigroups in $\N^n$ such that $\rA$ and $\rB$ are Cohen-Macaulay.
Then $\sA$ and $\sB$ can be glued if and only if $n=1$.
\end{theorem}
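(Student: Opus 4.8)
The plan is to prove the two directions separately. The ``if'' direction ($n=1$) is already established in the discussion preceding the theorem: for numerical semigroups one chooses $k_1\in\sB$ and $k_2\in\sA$, and then $C=k_1A\cup k_2B$ gives a gluing with $\rho=\underbar{x}^\alpha-\underbar{y}^\beta$; moreover every numerical semigroup ring is Cohen-Macaulay. So the real content is the ``only if'' direction: if $n\geq 2$ and $\sA$, $\sB$ are nondegenerate with $\rA$, $\rB$ Cohen-Macaulay, then they cannot be glued. I would argue by contradiction, assuming a gluing $\sC=\sA\sqcup\sB$ exists, and derive a dimension/depth inconsistency using Theorem \ref{thmJpaa}\eqref{projdim} together with the Auslander--Buchsbaum formula.

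The key steps, in order: First, set $p=\#A$, $q=\#B$, so $R=k[x_1,\dots,x_p,y_1,\dots,y_q]$ has $p+q$ variables and $\rC\simeq R/I_C$. Since $\sA$ is nondegenerate, $\dim\rA=\mathrm{rank}(A)=n$, and because $\rA$ is Cohen-Macaulay, Auslander--Buchsbaum gives $\pd(\rA)=p-\depth(\rA)=p-n$; similarly $\pd(\rB)=q-n$. Second, apply Theorem \ref{thmJpaa}\eqref{projdim}: $\pd(\rC)=\pd(\rA)+\pd(\rB)+1=(p-n)+(q-n)+1=p+q-2n+1$. Third, compute $\dim\rC$: the generating set of $\sC$ is $C=k_1A\cup k_2B$ (up to scaling, which does not change the ring), so the matrix $C$ has the columns of $k_1A$ and of $k_2B$, hence $\mathrm{rank}(C)\leq \mathrm{rank}(A)+\mathrm{rank}(B)=2n$, but also $\mathrm{rank}(C)\leq n$ since $C\subset\N^n$. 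Thus $\dim\rC=\mathrm{rank}(C)\leq n$. Fourth, invoke Auslander--Buchsbaum for $R/I_C$: $\pd(\rC)=(p+q)-\depth(\rC)$, and since $\depth(\rC)\leq\dim\rC\leq n$ we get $\pd(\rC)\geq p+q-n$. Fifth, combine: $p+q-2n+1=\pd(\rC)\geq p+q-n$, which forces $1\geq n$, i.e.\ $n\leq 1$ — contradicting $n\geq 2$.

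I expect the main technical point to be making sure the depth/projective-dimension bookkeeping is airtight: one must be careful that the Auslander--Buchsbaum formula is applied over the right polynomial ring (the localization at the maximal homogeneous ideal, or equivalently in the graded category where $\depth$ is computed with respect to the irrelevant ideal), and that $\depth\rC\leq\dim\rC$ is used with $\rC$ a standard graded $k$-algebra so that depth is well-defined and at most the Krull dimension. A secondary subtlety is confirming that scaling the generators ($C=k_1A\cup k_2B$ rather than $C=A\cup B$) does not affect $\mathrm{rank}(C)$ or the ideal $I_C$ — but this is exactly the observation already recorded in the Notations paragraph of the introduction, that multiplying generators by a common integer leaves the defining ideal unchanged, and scaling columns of an integer matrix clearly preserves its rank. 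Once these points are nailed down, the inequality $\pd(\rC)\geq p+q-\dim\rC\geq p+q-n$ contradicts the exact value $\pd(\rC)=p+q-2n+1$ whenever $n\geq 2$, completing the proof.
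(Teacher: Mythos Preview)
Your proposal is correct and follows essentially the same route as the paper: use Auslander--Buchsbaum and the Cohen--Macaulay hypothesis to get $\pd(\rA)=p-n$, $\pd(\rB)=q-n$, invoke theorem~\ref{thmJpaa}\,(\ref{projdim}) to obtain $\pd(\rC)=p+q-2n+1$, and compare against the lower bound $\pd(\rC)\geq p+q-n$ coming from $\dim\rC\leq n$. One minor remark: $\rC$ is not standard graded in general (it is graded over the semigroup $\sC$), but the inequality $\depth(\rC)\leq\dim\rC$ and the Auslander--Buchsbaum formula still apply in this positively graded setting, so your argument goes through unchanged.
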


\begin{proof}
As we already mentioned in the introduction, if $n=1$, then two arbitrary semigroups $\sA$ and $\sB$ can always be glued since for any $k_1\in\sB$ and
any $k_2\in\sA$, for $C=k_1A\cup k_2B$, one has that $\sC$ is a gluing of $\sA$ and $\sB$. Moreover, in this case all the semigroup rings
$\rA$, $\rB$ and $\rC$ are Cohen-Macaulay.

\smallskip
Conversely, assume that $n\geq 2$. Take two finite subsets $A$ and $B$ in $\N^n$ with $p$ and $q$ elements respectivelly, and assume
that $\sA$ and $\sB$ are nondegenerate, i.e., $\dim\rA=\dim\rB=n$, and that both rings $\rA$ and $\rB$ are Cohen-Macaulay.
Then, by the Auchslander-Bushbaum formula, of has that
$\pd (\rA)=p-n$ and $\pd (\rB)=q-n$. If $\sA$ and $\sB$ could be glued, then by theorem~\ref{thmJpaa}~(\ref{projdim}), one would have that 
$\pd (\rC)=p+q-2n+1$.
On the other hand, if $\dim\rA=n$ then $\dim\rC=n$ so, again by the Auchslander-Bushbaum formula, the projective
dimension of $\rC$ should be at least $p+q-n$. A contradiction.
\end{proof}

For $n\geq 2$, degeneracy is hence necessary in order to glue two Cohen-Macaulay semigroups.

\begin{example}{\rm
If $\sS\subset\N^2$ is the semigroup generated by $S=\{(3,0),(2,1),(1,2),(0,3)\}$,
the ideal $I_S$ is the defining ideal of the twisted cubic which is known to be Cohen-Macaulay.
By theorem \ref{thmNondegen}, $\sS$ can not be glued with itself in $\N^2$.
But one can consider the two degenerate semigroups $\sA$ and $\sB$ of $\N^3$ generated
respectively by $A=\{(4, 0, 0), (3, 1, 0), (2, 2, 0), (1, 3, 0)\}$ and 
$B=\{(3, 3, 0), (3, 2, 1),(3, 1, 2), (3, 0, 3)\}$, whose defining ideals $I_A\subset k[x_1,\ldots,x_4]$ and $I_B\subset k[y_1,\ldots,y_4]$ 
are both the defining ideal of the twisted cubic. In other words, $\rA\simeq \rB\simeq \rS$ and $\sA$ and $\sB$ can be thought as
two copies of $\sS$ in $\N^3$ where they are degenerate.
As shown in \cite[Ex. 7]{jpaa19}, $\sA$ and $\sB$ can be glued because if $C=A\cup B$,
then $I_C=I_A+I_B+\langle y_1^2-x_1x_4^2\rangle$.
}\end{example}

We will focus now on the case where one of the two semigroups, for example $\sB$, is the most degenerate possible, that is
when the dimension of $\rB$ is 1. This happens when $q=1$ (simple split) or when $q\geq 2$ and the matrix $B$ has rank 1,
i.e., when there exists $\b\in\N^n$ such that $B = \b \begin{bmatrix}u_1 & \ldots & u_q\end{bmatrix}$ for $u_1, \ldots, u_q \in \N$
(the elements lie on a line).
We focus on those two cases in sections \ref{secSimplesplit} and \ref{secLine} respectivelly.

\section{Simple split}\label{secSimplesplit}

Assume in this section that $q=1$. The only element in $B$ and the corresponding variable will be denoted here by $\b$ and $y$ 
(instead of $\b_1$ and $y_1$ respectively). So $B=\{\b\}$.
In this section, we will not assume that $\rA$ is Cohen-Macaulay since the results that we state here are valid with no hypothesis on $\sA$.

\medskip
One has that $I_B=(0)$ and $\rB\simeq k[y]$. So given two relatively prime integers $k_1$ and $k_2$, for 
$C=k_1 A\cup \{k_2\b\}$, one has that $\sC$ is a gluing \iff{} $I_C=I_A+\langle \rho\rangle$ 
for some binomial $\rho$ of the form $ \rho=y^d - x_1^{\alpha_1}\cdots x_p^{\alpha_p}$ being $d,\alpha_1,\ldots,\alpha_p$
positive integers  and $d\neq 0$.
This implies that 
\begin{equation}\label{eq1}
dk_2\b=\alpha_1 k_1\a_1+\cdots+\alpha_p k_1\a_p
\end{equation}
and hence a multiple of $\b$ has to be in $\sA$. Thus, a necessary condition for  $\sA$ and $\langle \b\rangle$ to be glued is 
that a multiple of $\b$ belongs to $\sA$. 
We will see in theorem \ref{thmSimplesplit} that this condition is also sufficient.

\medskip
On the other hand, a sufficient condition that guarantees that $\sA$ and $\langle \b\rangle$ can be glued is that $\b\in\sA$. The following result is more precise and states that $\sA$ and $\langle \b\rangle$ can be glued in many different ways in that case:

\begin{proposition}\label{propGlueBinA}
Suppose that $\b\in \sA$. For all positive integers $k_1, k_2$ such that $k_1$ is relatively prime to $\gcd (b_1, b_2,\dots, b_n)$,
if $C=k_1A\cup \{k_2\b\}$, then $\sC$ is a gluing of $\sA$ and $\langle \b\rangle$.
\end{proposition}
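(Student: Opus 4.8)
The plan is to reduce the statement to showing that, under the stated coprimality hypothesis, the kernel of $\phi_C$ is generated by $I_A$ together with a single binomial $\rho$ of the form $y^d - \underline{x}^\alpha$. Since $\b\in\sA$, we may write $\b = \alpha_1\a_1 + \cdots + \alpha_p\a_p$ for some $\alpha = (\alpha_1,\ldots,\alpha_p)\in\N^p$; fix such an expression. The idea is to find positive integers $d$ and an exponent vector so that $d\,k_2\b = k_1(\gamma_1\a_1 + \cdots + \gamma_p\a_p)$, which by the weight description of the grading on $\rC$ means $\rho = y^d - x_1^{\gamma_1}\cdots x_p^{\gamma_p}$ lies in $I_C$. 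Writing $g = \gcd(b_1,\ldots,b_n)$, the hypothesis $\gcd(k_1,g)=1$ is exactly what is needed to clear the denominator $k_1$: one takes $d$ to be $k_1$ times the appropriate cofactor so that $d k_2 \b = k_1 k_2 \cdot(\text{something involving }\alpha)$ actually holds on the nose, and I would spell out that $d = k_1 / \gcd(k_1, dk_2\cdot 1)$-type bookkeeping carefully — concretely, choosing $d$ a suitable multiple forces $dk_2\b \in k_1\sA$. This produces a candidate $\rho$ and shows $I_A + \langle\rho\rangle \subseteq I_C$.

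The substantive step is the reverse inclusion $I_C \subseteq I_A + \langle\rho\rangle$, equivalently that $\sC$ really is a gluing of $\langle k_1 A\rangle$ and $\langle k_2\b\rangle$ and not something larger. Here I would invoke Rosales' characterization \cite[Thm. 1.4]{rosales}: $\sC = \langle k_1A\rangle \sqcup \langle k_2\b\rangle$ if and only if the subgroup $G$ of $\Z^n$ generated by $k_1A$ and the subgroup $H$ generated by $\{k_2\b\}$ satisfy $G\cap H = \langle dk_2\b\rangle$ for the $d$ appearing in $\rho$, i.e.\ the intersection of the two groups is exactly the rank-one subgroup generated by the "glue" element, with the right index. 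Unwinding this, the condition becomes: the smallest positive multiple $m$ of $\b$ with $m\b$ lying in the group generated by $A$ (after scaling by $k_1$ and $k_2$) is precisely $dk_2$. Since $\b\in\sA$ already, $\b$ lies in the group generated by $A$, and the only obstruction to the multiples matching up comes from the interaction of $k_1$, $k_2$, and $g = \gcd(b_1,\ldots,b_n)$; the coprimality $\gcd(k_1,g)=1$ is exactly the arithmetic input that makes $G\cap H$ come out to be $\langle dk_2\b\rangle$ rather than a strictly larger subgroup of $\langle\b\rangle_\Z$.

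I expect the main obstacle to be the precise arithmetic of determining $d$ and verifying the index condition in Rosales' criterion, in particular checking that no smaller multiple of $k_2\b$ lands in $\langle k_1 A\rangle_\Z$. The point is that $\langle k_1 A\rangle_\Z \cap \langle \b\rangle_\Z$ is a subgroup of the rank-one lattice $\langle\b\rangle_\Z = \frac{1}{g}\Z\cdot\b\,\cap\,\Z^n$... (one must be careful: $\langle\b\rangle_\Z$ is just $\Z\b$, while the saturation in the line through $\b$ is $\frac1g$ of it), and one must show the coprimality hypothesis forces the intersection index to be compatible with $d k_2$. Once that index computation is done, Rosales' theorem gives $I_C = I_A + I_B + \langle\rho\rangle = I_A + \langle\rho\rangle$ directly, and since this holds for \emph{every} pair $(k_1,k_2)$ with $\gcd(k_1,g)=1$, we obtain the asserted flexibility: $\sA$ and $\langle\b\rangle$ can be glued in infinitely many ways. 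A secondary routine check is that the binomial $\rho$ one writes down genuinely has the split form required by the definition of gluing (first monomial in the $y$-variable only, second in the $x$-variables only), which is immediate from the construction.
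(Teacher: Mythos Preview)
Your approach via Rosales' group-intersection criterion is genuinely different from the paper's, which never invokes Rosales. The paper argues directly at the level of binomials: after reducing to $\gcd(k_1,k_2)=1$, it writes down $\rho = y^{k_1} - \prod_j x_j^{k_2 d_j}$ (where $\b = \sum_j d_j \a_j$ with $d_j\in\N$) and then shows every binomial $w = y^{\gamma}\prod_j x_j^{\alpha_j} - \prod_j x_j^{\beta_j}$ in $I_C$ already lies in $I_A + \langle\rho\rangle$. The point is that $\phi_C(w)=0$ forces $k_1\mid \gamma k_2 b_i$ for each $i$; since $\gcd(k_1,k_2)=1$ and $\gcd(k_1,g)=1$ with $g=\gcd(b_1,\dots,b_n)$, one gets $k_1\mid\gamma$, writes $\gamma=k_1 r$, and subtracts the obvious multiple of $\rho$ to kill the $y$-variable.

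Your route is viable in principle, but as written it has real gaps. First, your construction of $\rho$ is muddled: the expression ``$d = k_1/\gcd(k_1, dk_2\cdot 1)$'' is circular, and the repeated hedging hides that you have not actually identified $d$. The answer is simply $d=k_1$, with no bookkeeping needed: from $\b=\sum_j d_j\a_j$ one has $k_1(k_2\b) = \sum_j (k_2 d_j)(k_1\a_j)$ immediately. Second, what you call the ``main obstacle'' --- the index computation for $G\cap H$ --- is the entire content of the proof, and you have not carried it out. Concretely, $H=\Z k_2\b$, and if $m k_2\b \in k_1 G(A)\subset\Z^n$ then coordinatewise $k_1\mid m k_2 b_i$ for all $i$; with $\gcd(k_1,k_2)=1$ and $\gcd(k_1,g)=1$ this forces $k_1\mid m$, so $G\cap H=\Z(k_1 k_2\b)$, and since $k_1 k_2\b$ visibly lies in both semigroups $\langle k_1 A\rangle$ and $\langle k_2\b\rangle$, Rosales' criterion applies. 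Note this is exactly the same one-line divisibility step as in the paper's direct argument, so your route neither avoids nor simplifies the core of the proof. Your aside about the saturation of $\langle\b\rangle_\Z$ is a red herring: only the group $\Z k_2\b$ itself enters the criterion.
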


\begin{proof}
As observed in the introduction, one can assume without loss of generality that $k_1$ and $k_2$ are relatively prime.
Since $\b\in \sA$, there exist integers $d_j \in \N$ such that 
$$b_i = \sum _{j=1}^p a_{ij}d_j\,,\ \forall i,\ 1\leq i\leq n\,.$$
So, $\rho = y^{k_1} -\prod_{j=1}^p x_j^{d_jk_2} \in I_C$ and we will show that $I_C = I_A+ \langle\rho\rangle$.  

\smallskip
First, let $w =y^{\gamma} \prod_j x_j^{\alpha_j}-\prod_j x_j^{\beta_j}$ 
be a arbitrary binomial in $I_C$ involving the variable $y$.  
Then we must have $\phi_C(w) = 0$, and hence 
$$\gamma k_2b_i + \sum_{j=1}^p \alpha_j k_1a_{ij} - \sum _{j=1}^p \beta_j k_1 a_{ij}= 0.$$
Since we assumed that $k_1$ and $k_2$ are relatively prime, 
$k_1| \gamma b_i$ for all $i$.  But by the fact that $k_1 $ is relatively prime to $\gcd(b_1, \ldots, b_n)$, we see that $k_1|\gamma$.  Let $\gamma = k_1r$. 

\smallskip
Now one has that $w =y^{k_1r} \prod_{j}x_j^{\alpha_j}-\prod_jx_j^{\beta_j}  \in I_C$.  
If we set $\rho^{[r]}=y^{k_1 r} -\prod_{j=1}^p x_j^{d_jk_2r}$, which is a multiple of $\rho$ and hence an element in $I_C$,
one has that  $w-\rho^{[r]}\prod_jx_i^{\alpha_j}  = \prod_jx_j^{d_jk_2r+\alpha_j}-\prod_jx_j^{\beta_j} \in I_C $ and it does not
involve the variable $y$ so it belongs to $I_A$.
So, $w\in I_A+\langle\rho\rangle$ and we have proved that $I_C = I_A+\langle\rho\rangle$. 
\end{proof}

\begin{remark}{\rm
Note that when one has a semigroup $\sA$ that is not minimally generated by $A$, for example if $\a_p\in \langle A'\rangle$
for $A'=A\setminus\{\a_p\}$, then we are in the situation described in proposition \ref{propGlueBinA} and $\sA$ is a gluing of $ \langle A'\rangle$
and $\langle\a_p\rangle$. This is somehow what we could call a trivial gluing.
}\end{remark}

Now observe that if one looks more precisely at condition (\ref{eq1}), one gets that if for $C=k_1 A\cup \{k_2\b\}$, one has that $\sC$ is a gluing
of $\sA$ and $\langle \b\rangle$, then $X=\begin{pmatrix} k_1\alpha_1\\ \vdots\\ k_1\alpha_p\end{pmatrix}$ is 
a solution to the system $A\cdot X=dk_2\b$ that belongs to $\N^p$.
This gives a necessary condition on $A$ and $\b$ for $\sA$ and $\langle\b\rangle$ to be glued: 
the system $A\cdot X=d\b$ must have a solution in $\N^p$
for some integer $d\geq 1$.

\medskip
This condition is also sufficient: if the system $A\cdot X=d\b$ has a solution in $\N^p$
for some integer $d\geq 1$, then $d\b\in\sA$ and, by proposition \ref{propGlueBinA}, for $C=k_1 A\cup \{d\b\}$, one has
that $\sC$ is a gluing of $\sA$ and $\langle \b\rangle$ for any integer $k_1$ relatively prime to $d$ and to $\gcd(b_1,\ldots,b_n)$.
This shows the following characterization:

\begin{theorem}\label{thmSimplesplit}
If $B$ has only one element, i.e., $B=\{\b\}$, then $\sA$ and $\sB$ can be glued \iff{} 
a multiple of $\b$ belongs to $\sA$, equivalently, \iff{}
the system $A\cdot X=d\b$ has a solution in $\N^p$ for some $d\in\N$.
\end{theorem}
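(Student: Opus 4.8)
The plan is to prove the two characterizations by combining the necessary conditions already extracted above with Proposition~\ref{propGlueBinA}. First I would verify the equivalence of the two stated conditions, ``a multiple of $\b$ belongs to $\sA$'' and ``the system $A\cdot X=d\b$ has a solution in $\N^p$ for some $d\in\N$'': indeed, $d\b\in\sA$ means precisely that $d\b=\sum_{j=1}^p d_j\a_j$ for nonnegative integers $d_j$, i.e. that $X=(d_1,\ldots,d_p)^t\in\N^p$ solves $A\cdot X=d\b$. So these two formulations say the same thing, and it suffices to characterize gluability by either one.

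Next I would handle the necessity direction. Suppose $\sA$ and $\sB=\langle\b\rangle$ can be glued, so that there are $k_1,k_2\in\N$ with, for $C=k_1A\cup\{k_2\b\}$, the equality $I_C=I_A+\langle\rho\rangle$ for some binomial $\rho$. Since $I_B=(0)$ and $\rB\simeq k[y]$, the binomial $\rho$ must involve the variable $y$, and as noted before the statement it necessarily has the form $\rho=y^d-x_1^{\alpha_1}\cdots x_p^{\alpha_p}$ with $d\geq 1$ and $\alpha_j\geq 0$. Applying $\phi_C$ to $\rho$ gives equation~(\ref{eq1}), $dk_2\b=\sum_{j=1}^p\alpha_jk_1\a_j$, which exhibits $dk_2\b$ as an element of $\sA$; hence a (nonzero) multiple of $\b$ lies in $\sA$, and equivalently $X=(k_1\alpha_1,\ldots,k_1\alpha_p)^t\in\N^p$ solves $A\cdot X=(dk_2)\b$, which is the system for the integer $dk_2\geq 1$.

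For sufficiency, suppose a multiple of $\b$ is in $\sA$, say $d\b\in\sA$ for some $d\geq 1$; equivalently the system $A\cdot X=d\b$ has a solution in $\N^p$. Then $\langle d\b\rangle$ is a subsemigroup with $d\b\in\sA$, so Proposition~\ref{propGlueBinA} applies with $\b$ there replaced by $d\b$: choosing any positive integer $k_1$ relatively prime to $\gcd$ of the coordinates of $d\b$ (for instance $k_1$ coprime to both $d$ and $\gcd(b_1,\ldots,b_n)$, or simply $k_1=1$) and any $k_2$, and setting $C=k_1A\cup\{k_2\cdot d\b\}$, we get that $\sC$ is a gluing of $\sA$ and $\langle d\b\rangle$. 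Since $\langle d\b\rangle=\langle k_2 d\b\rangle$ defines the same semigroup ring as $\langle\b\rangle$ (multiplying the single generator by a positive integer does not change $I_B=(0)$), this says exactly that $\sA$ and $\sB$ can be glued in the sense of the Definition, with the role of $k_2$ absorbed into $d$. This completes both implications. The only point requiring care—the ``main obstacle,'' though a mild one—is the bookkeeping of constants: one must check that the necessary multiple $d\b$ produced in~(\ref{eq1}) and the multiple $d\b$ fed into Proposition~\ref{propGlueBinA} are compatible with the freedom to rescale $A$ by $k_1$ and $\b$ by an arbitrary positive integer, so that the existence of \emph{some} $d\geq 1$ with $d\b\in\sA$ is genuinely equivalent to gluability as defined.
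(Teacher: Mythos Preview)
Your proposal is correct and follows essentially the same route as the paper: necessity is read off from equation~(\ref{eq1}), sufficiency is obtained by applying Proposition~\ref{propGlueBinA} to $d\b\in\sA$ (the paper phrases this as taking $k_2=d$, you phrase it as replacing $\b$ by $d\b$ and absorbing the factor into $k_2$, which amounts to the same thing), and the equivalence of the two stated conditions is immediate. Your added remarks about bookkeeping of constants are sound but not strictly needed beyond what the paper already does.
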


\begin{example}{\rm
If $A=\{\begin{pmatrix} 1\\ 2\end{pmatrix}, \begin{pmatrix} 2\\ 1\end{pmatrix}\}$ and $B=\{\b\}$ with $\b=\begin{pmatrix} 3\\ 0\end{pmatrix}$,
it is clear that for all positive integer $d$, the system $A\cdot \begin{pmatrix} x\\ y\end{pmatrix}=d\b$ has no solution in $\N^2$ and hence
$\sA$ and $\sB$ can not be glued by theorem \ref{thmSimplesplit}.
}\end{example}

Assuming now that the conditions in theorem \ref{thmSimplesplit} are satisfied,
we can determine precisely the way to choose the 
integers $k_1$ and $k_2$ so that, for $C=k_1A\cup k_2B$, the semigroup $\sC$ is a gluing of $\sA$ and $\sB$.

\begin{notations}
{\rm
Assume that the system $A\cdot X=d\b$ has a solution in $\N^p$ for some $d\in\N$.
We will use the following notations:
\begin{itemize}
\item
$d(A,\b)$ is the smallest integer $d>0$ such that $A\cdot X=d\b$ has a solution in $\N^p$,
\item
$s(A,\b)$ is the smallest integer $s>0$ such that $A\cdot X=s\b$ has a solution in $\Z^p$.
\end{itemize}
}\end{notations}

\begin{theorem}\label{thmGlue1elt}
Let $A= \{\a_1,\ldots,\a_p\}\subset \mathbb N^n$ and $\b\in \N^n$ be such that a multiple of $\b$ is in $\sA$ and set $d=d(A,\b)$. 
Take two positive integers $k_1,k_2$ and set $C = k_1A\cup \{k_2\b\}$.
Then, $\sC$ is a gluing of $\sA$ and $\langle\b\rangle$ \iff{} ${d \over \gcd(d,k_2)}=s(A,k_2\b)$.
\end{theorem}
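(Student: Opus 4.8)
The plan is to translate the gluing condition into a statement about the $\rA$-algebra $\rC$, and then into the stated numerical condition. By the remark following the definition of gluing, we may assume $\gcd(k_1,k_2)=1$. Since $I_B=(0)$, the homomorphism $\phi_C$ factors through a surjection $\psi\colon\rA[y]\to\rC$ carrying $y$ to $t^{k_2\b}$ and the class of $x_j$ to $t^{k_1\a_j}$; and, as noted just before Theorem~\ref{thmSimplesplit}, $\sC$ is a gluing of $\sA$ and $\langle\b\rangle$ precisely when $J:=\ker\psi$ is principal, generated by a binomial $\rho=y^m-\underline{x}^\gamma$ with $k_1A\gamma=mk_2\b$, that is, by a polynomial monic in $y$ with monomial constant term.

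Next I would pass from this to a semigroup membership. Let $K$ be the fraction field of $\rA$. As $t^{k_2\b}$ is a monomial, $JK[y]$ is generated by $y^{m_0}-t^{m_0k_2\b}$, where $m_0=[K(t^{k_2\b}):K]$ is the least positive integer with $m_0k_2\b\in\Z(k_1A)=k_1\,\Z A$; in particular $\rC$ is a torsion-free $\rA$-module of rank $m_0$, and one checks it is module-finite over $\rA$. Consequently any binomial cutting out $J$ over $\rA$ must be monic in $y$ of degree $m_0$, since a non-monic binomial $\underline{x}^\alpha y^a-\underline{x}^\beta$ with $\alpha\neq 0$ would make the corresponding quotient of $\rA[y]$ not module-finite over $\rA$. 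So $J$ has the required form if and only if $y^{m_0}-\underline{x}^\gamma\in J$ for some $\gamma\in\N^p$, i.e.\ if and only if $t^{m_0k_2\b}\in\rA$, i.e.\ if and only if $\frac{m_0}{k_1}k_2\b\in\sA$; and when this holds, the induced surjection $\rA[y]/(y^{m_0}-\underline{x}^\gamma)\to\rC$ from a free module of rank $m_0$ onto a torsion-free module of the same rank must be an isomorphism, whence $I_C=I_A+\langle\rho\rangle$. Thus $\sC$ is a gluing of $\sA$ and $\langle\b\rangle$ if and only if $\frac{m_0}{k_1}k_2\b\in\sA$.

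It then remains to identify this with the stated condition. Using $\gcd(k_1,k_2)=1$, the fact that $\{\nu\in\Z:\nu\b\in\Z A\}$ equals the subgroup $s(A,\b)\,\Z$ (so that $s(A,k_2\b)=s(A,\b)/\gcd(s(A,\b),k_2)$), and the role of $e:=\gcd(b_1,\dots,b_n)$, one computes $m_0$ in terms of $k_1$ and $s(A,k_2\b)$, and then rewrites the membership $\frac{m_0}{k_1}k_2\b\in\sA$ as $\frac{d}{\gcd(d,k_2)}=s(A,k_2\b)$ with $d=d(A,\b)$. Note that $d\b\in\sA$ gives at once $\frac{d}{\gcd(d,k_2)}k_2\b=\frac{k_2}{\gcd(d,k_2)}(d\b)\in\sA$, so $s(A,k_2\b)\le\frac{d}{\gcd(d,k_2)}$ always; the substance is to show that the gluing membership holds precisely when this inequality is an equality, which also yields the ``$\Leftarrow$'' direction through the explicit $\rho$ of $y$-degree $k_1\frac{d}{\gcd(d,k_2)}$ whose monomial part is prescribed by $\frac{d}{\gcd(d,k_2)}k_2\b=\frac{k_2}{\gcd(d,k_2)}(d\b)$.

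I expect this last, numerical, step to be the main obstacle. Matching the lattice-theoretic $m_0$ (equivalently $s(A,k_2\b)$) with the semigroup-theoretic $d(A,\b)$ requires a grip on the monoid $\{t\in\N:tk_2\b\in\sA\}$ along the ray $\Q_{\ge0}\b$, and a careful account of how $e$ contributes to $m_0$ when $\gcd(k_1,e)>1$ --- the case that Proposition~\ref{propGlueBinA} sidesteps. By contrast, the first two steps are essentially formal once the reduction to $\gcd(k_1,k_2)=1$ is in place.
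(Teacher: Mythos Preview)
Your proposal is explicitly incomplete: you reduce the gluing condition to the membership ``$m_0k_2\b\in k_1\sA$'' (with $m_0$ the rank of $\rC$ over $\rA$, i.e.\ the least $m$ with $mk_2\b\in k_1\Z A$), and then leave the identification of this with the stated condition $d/\gcd(d,k_2)=s(A,k_2\b)$ as an acknowledged ``main obstacle'' that you do not carry out. That step is not merely delicate---it cannot be completed as written, because the two conditions are \emph{not} equivalent. Take $n=1$, $A=\{4,6,7\}$, $\b=1$, $k_1=1$, $k_2=10$. Then $d(A,\b)=4$, so $d/\gcd(d,k_2)=4/2=2$, while $s(A,k_2\b)=s(A,10)=1$; the theorem's condition fails. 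Yet $10=4+6\in\sA$, so $\rho'=y-x_1x_2\in I_C$ and one checks $I_C=I_A+\langle\rho'\rangle$; correspondingly $m_0=1$ and your membership $m_0k_2\b=10\in\sA$ holds. Your intermediate characterization is thus the correct one, but it does not reduce to the numerical criterion you are asked to prove.

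The paper's own argument has the matching gap, which is why your reduction does not meet it. In the line ``if $r<\delta$, then $\sC$ is not a gluing because $w\in I_C$ and $w\notin I_A+\langle\rho\rangle$'', the symbol $\rho$ denotes the \emph{specific} binomial of $y$-degree $k_1\delta$ constructed earlier; ruling out that particular $\rho$ does not preclude a different gluing binomial of smaller $y$-degree. In the example above the paper's $\rho$ is $y^2-x_1^5$, while $\rho'=y-x_1x_2$ already does the job. The underlying point is that $d=d(A,\b)$ records only the least $t$ with $t\b\in\sA$, whereas $\{t>0:t\b\in\sA\}$ is a numerical semigroup that need not be $d\N$; your membership condition correctly asks whether $m_0k_2$ lands in (a scaled copy of) that semigroup, and this cannot in general be read off from $d$ and $\gcd(d,k_2)$ alone.
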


\begin{proof}
Set $\delta={d \over \gcd(d,k_2)}$. 
By definition of $d$, 
there exist $d_1,\ldots,d_p\in \mathbb N$ such that 
$$d\b = \sum_{j=1}^{p}  d_j\a_{j}\,.$$
Hence,
$k_1d(k_2 \b) = \sum_{j=1}^{p} k_2 d_j (k_1\a_{j})$ and, factoring out $\gcd(d,k_2)$, we get
$$
k_1\delta (k_2 \b) = \sum_{j=1}^{p} {k_2 \over \gcd(d,k_2)} d_j (k_1\a_{j})\,.
$$
Thus, $\rho = y^{k_1\delta} - \prod _{j=1}^{p} x_j^{{k_2\over \gcd(d,k_2)} d_j }\in I_C$.  

\medskip
Given an element $\alpha\in\Z^p$, we will denote 
$\alpha_+=\{j,\ 1\leq j\leq p\,/\ \alpha_j>0\}$ and $\alpha_-=\{j,\ 1\leq j\leq p\,/\ \alpha_j<0\}$.
Set $r=s(A,k_2\b)$. By definition,
$r$ is the smallest positive integer such that there exists a binomial of the form
$w=y^{k_1r} \prod_{j\in\alpha_+}x_j^{\alpha_j}-\prod_{j\in\alpha_-}x_j^{-\alpha_j}$ in $I_C$
for some $\alpha\in\Z^p$.
So if $w'=y^{k_1r'} \prod_{j\in\alpha'_+}x_j^{\alpha_j'}-\prod_{j\in\alpha'_-}x_j^{-\alpha_j'}$ is any binomial in $I_C$ of this form,
then $r\leq r'$ and one can write $r'=qr+r''$ for some $0\leq r''<r$. Setting
$w^{[q]}=y^{k_1rq} \prod_{j\in\alpha_+}x_j^{\alpha_j q}-\prod_{j\in\alpha_-}x_j^{-\alpha_j q}$, 
which is a multiple of $w$ and hence belongs to $I_C$, one has that
$\prod_{j\in\alpha_+}x_j^{\alpha_j q}w'-y^{k_1r''} \prod_{j\in\alpha'_+}x_j^{\alpha_j'}w^{[q]}= 
y^{k_1r''} \prod_{j\in\alpha'_+}x_j^{\alpha_j'} \prod_{j\in\alpha_-}x_j^{-\alpha_j q}-
\prod_{j\in\alpha'_-}x_j^{-\alpha_j'}\prod_{j\in\alpha_+}x_j^{\alpha_j q}\in I_C$.
Since $I_C$ is a prime binomial ideal, one can simplify this binomial by the common factor of the two monomials
and get a binomial of the form $y^{k_1r''} \prod_{j\in\beta_+}x_j^{\beta_j}-\prod_{j\in\beta_-}x_j^{-\beta_j}$ in $I_C$
for some $\beta\in\Z^p$.
By minimality of $r$, it implies that $r''=0$ and so $r$ divides $r'$. 
Applying this to $w'=\rho$, one gets that $r$ divides $\delta$ and, in particular, $r\leq\delta$.

\medskip
If $r<\delta$, then $\sC$ is not a gluing because in this case $w\in I_C$ and $w\notin I_A+\langle\rho\rangle$.

\medskip
If $r=\delta$, then $\rho=y^{k_1r} - \prod _{j=1}^{p} x_j^{{k_2\over \gcd(d,k_2)} d_j }$ and
if $w'$ is any other binomial in $I_C$ which is the difference between two monomials of disjoint supports, 
$w'=y^{k_1qr} \prod_{j\in\alpha'_+}x_j^{\alpha_j'}-\prod_{j\in\alpha'_-}x_j^{-\alpha_j'}$,
then $w'- \prod_{j\in\alpha_+}x_{j}^{\alpha_j'}\rho^{[q]}= 
\prod _{j=1}^{p} x_j^{{k_2\over \gcd(d,k_2)} d_j q}\prod _{j\in\alpha'_+} x_j^{\alpha_j'}
-\prod_{j\in\alpha'_-}x_j^{-\alpha_j'}$
that belongs to $I_A$ since it is in $I_C$ and the variable $y$ is not involved. This shows that, in that case,
$I_C=I_A+\langle\rho\rangle$ and hence $\sC$ is a gluing.
\end{proof}

\begin{remark}{\rm
We see by theorem \ref{thmSimplesplit} that $\sA$ and $\sB$ can be glued \iff{} $A\cdot X= dB$ has a solution in the positive integers for some $d\in\N$.    Let $d$ be the smallest such positive integer.
Applying proposition \ref{propGlueBinA}, one gets that if one chooses $k_1$ and $k_2$ such that $k_1$ is anything that is relatively prime to $\gcd(b_1,\ldots,b_n)$ and $d$, and $k_2$ is any multiple of $d$, then for $C = k_1A\cup \{k_2\b\}$, $\sC$ is a gluing of $\sA$ and $\sB$.
But theorem \ref{thmGlue1elt} is more precise and specifies exactly how to pick integers $k_1, k_2$ so that for $C = k_1A\cup \{k_2\b\}$, $\sC$ is a gluing of $\sA$ and $\sB$.
When $k_1$ is anything that is relatively prime to $\gcd(b_1,\ldots,b_n)$ and $d$ and $k_2$ is any multiple of $d$, then 
$\delta={d \over \gcd(d,k_2)}=1$ and $r=s(A,k_2\b)=1$ so obviously $\delta=r$ but this could also happen when the previous
condition does not hold as examples \ref{exGluing1eltCM} shows.
}
\end{remark}

\begin{corollary}\label{corSqfree}
Let $A= \{\a_1,\ldots,\a_p\}\subset \mathbb N^n$ and $\b\in \N^n$ be such that a multiple of $\b$ is in $\sA$,
set $d=d(A,\b)$ and $s=s(A,\b)$, and assume that $d$ is squarefree. 
Then $d= st$ for some $t\in\N$, and for any positive integers $k_1,k_2$ such that $k_2$ is a multiple of $t$, 
one has that  for $C = k_1A\cup \{k_2\b\}$, $\sC$ is a gluing of $\sA$ and $\langle\b\rangle$.
\end{corollary}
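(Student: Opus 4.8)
The plan is to reduce everything to Theorem~\ref{thmGlue1elt}. First I would record the divisibility $s\mid d$, which yields the factorization $d=st$ and uses nothing about squarefreeness. Consider the set
$$
G=\{m\in\Z\ \mid\ \text{the system }A\cdot X=m\b\text{ has a solution }X\in\Z^p\}\,.
$$
It is a subgroup of $\Z$ (if $A\cdot X_i=m_i\b$ then $A\cdot(X_1\pm X_2)=(m_1\pm m_2)\b$), and it is nonzero because $d\b\in\sA$ for $d=d(A,\b)$; hence $G=s\Z$ with $s=s(A,\b)$. Any solution in $\N^p$ is in particular a solution in $\Z^p$, so $d\in G$ and therefore $s\mid d$; writing $d=st$ with $t\in\N$, $t\geq 1$, proves the first assertion of the corollary.

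Next I would compute $s(A,k_2\b)$ for an arbitrary positive integer $k_2$. By the same description of $G$, the system $A\cdot X=rk_2\b$ has a solution in $\Z^p$ \iff{} $rk_2\in s\Z$, that is, \iff{} $\frac{s}{\gcd(s,k_2)}\mid r$; hence $s(A,k_2\b)=\frac{s}{\gcd(s,k_2)}$. Plugging this into Theorem~\ref{thmGlue1elt}, for $C=k_1A\cup\{k_2\b\}$ the semigroup $\sC$ is a gluing of $\sA$ and $\langle\b\rangle$ \iff{} $\frac{d}{\gcd(d,k_2)}=\frac{s}{\gcd(s,k_2)}$, so it remains to verify this equality whenever $t\mid k_2$.

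Finally I would run the gcd computation, and this is the one place where squarefreeness of $d$ genuinely enters: since $d=st$ is squarefree, $s$ and $t$ are coprime. Write $k_2=tk_2'$ with $k_2'\in\N$. Then $\gcd(d,k_2)=\gcd(st,tk_2')=t\gcd(s,k_2')$, so $\frac{d}{\gcd(d,k_2)}=\frac{s}{\gcd(s,k_2')}$; and since $\gcd(s,t)=1$, one has $\gcd(s,k_2)=\gcd(s,tk_2')=\gcd(s,k_2')$, so $\frac{s}{\gcd(s,k_2)}=\frac{s}{\gcd(s,k_2')}$ as well. The two sides coincide, Theorem~\ref{thmGlue1elt} applies, and $\sC$ is a gluing of $\sA$ and $\langle\b\rangle$ for every positive integer $k_1$. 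I expect the only real obstacle to be isolating this last step: if $d$ fails to be squarefree then $s$ and $t$ may share a prime, and then neither the cancellation of the factor $t$ nor the identity $\gcd(s,tk_2')=\gcd(s,k_2')$ need hold, so the criterion of Theorem~\ref{thmGlue1elt} can break down for some multiples of $t$ — which is exactly why the hypothesis is needed and why the statement singles out $t=d/s$.
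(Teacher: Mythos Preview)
Your argument is correct and follows the same route as the paper: reduce to Theorem~\ref{thmGlue1elt} and use that squarefreeness of $d=st$ forces $\gcd(s,t)=1$. The one organizational difference is that you identify the subgroup $G=s\Z$ and read off the closed formula $s(A,k_2\b)=s/\gcd(s,k_2)$ directly, whereas the paper instead shows $\delta\mid r$ by an ad-hoc divisibility chain and then invokes the fact $r\mid\delta$ established inside the proof of Theorem~\ref{thmGlue1elt}; your formulation is a little cleaner but the substance is the same.
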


\begin{proof}
Set $s=s(A,\b)$.  Now, since $s$ is the smallest positive integer such that $A\cdot X =s\b$ has a solution in integers, and $A\cdot X =d \b$ has a solution in positive integers and hence in integers, we must have $s|d$. Thus, $d= st$ for some $t\in \mathbb N$. 
Since $d$ is square free, $\gcd(s,t) =1$.  On the other hand, $k_2 = tu$ by hypothesis.  
So, $ \gcd(k_2, d) = \gcd (tu, ts) = tu_1$  where   $u_1=\gcd(u,s)$.
Then $u=u_1u'$ with $u'$ relatively prime to $s$, because $d$ and hence $s$ is squarefree. 
Further, $\delta={d \over \gcd(d,k_2)}={s\over u_1}$, so $d = st = tu_1\delta$.
Now, $d$ is squarefree implies $\delta$ is relatively prime to $t u_1$. 
Setting $r=s(A,k_2\b)$, $r$ is the smallest positive integer such that   $A\cdot X = rk_2\b$ has a solution in integers.  
So,  $s|rk_2 = rtu_1u'$ and since $\gcd(s,tu')=1$, $s$ divides $ru_1$. But $s=\delta u_1$ so $\delta$ divides $r$.
We saw in the proof of theorem \ref{thmGlue1elt} that $r$ divides $\delta$ so $\delta=r$,
and hence $\sC$ is a gluing of $\sA$ and $\langle\b\rangle$. 
\end{proof}

\begin{corollary}\label{corCMchar1elt}
Suppose that $\sA$ and $\langle \b\rangle$ can be glued, and let $\sC$ be a gluing of $\sA$ and $\langle\b\rangle$. 
Then, $\dim{\rA}=\dim{\rC}$ and $\depth (\rA)=\depth (\rC)$.
In particular, $\rC$ is Cohen-Macaulay \iff{} $\rA$ is Cohen-Macaulay.
\end{corollary}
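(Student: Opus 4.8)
The plan is to read off the projective dimension of $\rC$ from Theorem \ref{thmJpaa}(\ref{projdim}), translate it into a depth statement via the Auslander--Buchsbaum formula, and treat the dimension equality separately by a rank computation. Since $B=\{\b\}$ has a single element, $I_B=(0)$ and $\rB\simeq k[y]$ is a polynomial ring, so $\pd(\rB)=0$; hence Theorem \ref{thmJpaa}(\ref{projdim}) yields $\pd(\rC)=\pd(\rA)+1$. Because the resolutions involved here are finite (the ambient rings are polynomial rings), this is the only input from that theorem that I need.

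Next I would apply Auslander--Buchsbaum over the two polynomial rings. The ring $\rA$ is a quotient of $k[x_1,\ldots,x_p]$, which has depth $p$, so $\depth(\rA)=p-\pd(\rA)$. Likewise $\rC$ is a quotient of $R=k[x_1,\ldots,x_p,y]$, which has depth $p+1$ (this is where the hypothesis $q=1$ enters: the ambient ring for $\rC$ has exactly one more variable than that for $\rA$), so
$$
\depth(\rC)=(p+1)-\pd(\rC)=(p+1)-\bigl(\pd(\rA)+1\bigr)=p-\pd(\rA)=\depth(\rA).
$$
For the dimensions, $\dim\rC=\rk{C}$ with $C=k_1A\cup\{k_2\b\}$; since $\sA$ and $\langle\b\rangle$ can be glued, Theorem \ref{thmSimplesplit} tells us that a multiple of $\b$ lies in $\sA$, so $\b$ lies in the $\Q$-linear span of the columns of $A$. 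Adjoining the column $k_2\b$ to the matrix $k_1A$ therefore does not change the rank, giving $\dim\rC=\rk{A}=\dim\rA$.

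Finally, since $\depth\leq\dim$ always holds, $\rC$ is Cohen--Macaulay if and only if $\depth(\rC)=\dim(\rC)$, which by the two equalities above is equivalent to $\depth(\rA)=\dim(\rA)$, i.e. to $\rA$ being Cohen--Macaulay. I do not expect a genuine obstacle here; the only points requiring a little care are checking that $\pd(\rB)=0$, keeping the variable counts of the two ambient polynomial rings straight when invoking Auslander--Buchsbaum, and using the necessary condition from Theorem \ref{thmSimplesplit} to secure the rank (hence dimension) equality.
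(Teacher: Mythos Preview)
Your proof is correct and follows essentially the same route as the paper's: use Theorem~\ref{thmSimplesplit} to get a multiple of $\b$ in $\sA$ and hence $\rk{A}=\rk{C}$, use Theorem~\ref{thmJpaa}(\ref{projdim}) with $\pd(\rB)=0$ to get $\pd(\rC)=\pd(\rA)+1$, and then apply Auslander--Buchsbaum over polynomial rings differing by one variable. The only difference is that you spell out the $\pd(\rB)=0$ step and the variable count explicitly, which the paper leaves implicit.
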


\begin{proof}
Since  $\sA$ and $\langle\b\rangle$ can be glued,  by theorem \ref{thmSimplesplit}, we know that a multiple of $\b$ is in $\sA$ and it follows that  $\rk{A}=\rk{C}$,
so $\dim{\rA}=\dim{\rC}$. On the other hand, $\pd(\rC)=\pd(\rA)+1$ by theorem \ref{thmJpaa} (\ref{projdim}) and as the polynomial ring $R$
has one variable more than $R_A$, the Auslander-Buchsbaum formula shows that $\depth (\rA)=\depth (\rC)$.  Therefore, we see that, $\rC$ is Cohen-Macaulay \iff{} $\rA$ is Cohen-Macaulay.
\end{proof}

We end this section with a series of illustrating examples.

\begin{example}{\rm
Consider  
$
A=\{
\begin{pmatrix} 7\\0\end{pmatrix},\begin{pmatrix} 6\\2\end{pmatrix},\begin{pmatrix} 3\\8\end{pmatrix},\begin{pmatrix} 0\\9\end{pmatrix}\}
\subset\N^2
$ 
and $\b=\begin{pmatrix} 3\\4\end{pmatrix}$. 
For $C=A\cup\{\b\}$, one has that $\sC$ is a not gluing of $\sA$ and $\langle\b\rangle$ because using Singular \cite{Sing}, one can check that
$I_C=I_A+\langle\rho\rangle+\langle x_2x_4^2y-x_3^3, x_1^3x_4^2y-x_2^3x_3^2,
x_3^2y^5-x_1^3x_4^4\rangle$ for $\rho=y^6-x_2^3x_4^2$.
In this case $d(A,\b)=6>s(A,\b)=1$. 
Of course since $6\b\in\sA$, $\sA$ and $\langle\b\rangle$ can be glued by proposition \ref{propGlueBinA},
and for $C=A\cup\{6\b\}$, $\sC$ is a gluing of $\sA$ and $\langle\b\rangle$.
In this example, $6$ is the smallest positive integer such that this occurs. 
}\end{example}

\begin{example}\label{exGluing1eltCM}{\rm
For 
$A=\{
\begin{pmatrix} 5\\0\end{pmatrix},\begin{pmatrix} 3\\2\end{pmatrix},\begin{pmatrix} 2\\3\end{pmatrix},\begin{pmatrix} 0\\5\end{pmatrix}\}\subset\N^2
$
and $\b=\begin{pmatrix} 1\\1\end{pmatrix}$, it is clear that $5\b\in\sA$ so $\sA$ and $\sB$ can be glued by theorem \ref{thmSimplesplit}.
Using Singular \cite{Sing}, one can check that one can choose $k_1=k_2=1$, and for $C=A\cup B$, $\sC$ is a gluing of $\sA$ and $\sB$.
This is also given by theorem \ref{thmGlue1elt} since $d(A,\b)=s(A,\b)=5$. This example shows that
it is not necessary to choose $k_2$ as a multiple of $d(A,\b)$.
In this example, both $\rA$ and $\rC$ are Cohen-Macaulay of type 2.
}\end{example}

\begin{example}\label{exGluing1eltNotCM}{\rm
An example very similar to the previous one is given by 
$$A=\{
\begin{pmatrix} 5\\0\end{pmatrix},\begin{pmatrix} 4\\1\end{pmatrix},\begin{pmatrix} 1\\4\end{pmatrix},\begin{pmatrix} 0\\5\end{pmatrix}\}\subset\N^2
\quad\hbox{\rm and}\quad
\b=\begin{pmatrix} 1\\1\end{pmatrix}\,.$$
Again, one can choose  $k_1=k_2=1$ and for $C=A\cup B$, $\sC$ is a gluing of $\sA$ and $\sB$.
Here, $\rA$ and $\rC$ are not Cohen-Macaulay and their minimal free resolutions show as:
$$
0\rar R_A^2\rar R_A^6\rar R_A^5\rar \rA\rar 0 \quad\hbox{\rm and}\quad
0\rar R^2\rar R^8\rar R^{11}\rar R^6\rar \rC\rar 0\,.
$$
}\end{example}

\section{When the elements lie on a line}\label{secLine}

Consider two subsets $A=\{\a_1,\ldots,\a_p\}$ and $B=\{\b_1,\ldots,\b_q\}$ in $\mathbb N^n$ such that
all the elements in $B$ lie on a same line, i.e.,  $q\geq 2$ and
$B = \b \begin{bmatrix}u_1 & \ldots & u_q\end{bmatrix}$ 
for some $u_1,\ldots,u_q\in\N$ and 
$\b=\begin{pmatrix} b_1\\ \vdots\\ b_n\end{pmatrix}\in\N^n$. 
One can always assume, without loss of generality, that $\gcd(b_1,\ldots,b_n)=1$, $\gcd(u_1,\ldots,u_q)=1$ and
$\gcd(a_{ij}\,,1\leq i\leq n,\,1\leq j\leq p\,)=1$ if needed. 
If this does not occur, one can factor out the gcd in $A$ or $B$ and simplify to get another 
semigroup with the same semigroup ring.

\begin{proposition}\label{propLine}
Suppose that $\b\in \sA$ and  $B = \b \begin{bmatrix}u_1 & \ldots & u_q\end{bmatrix}$ for $u_1, \ldots, u_q \in \N$.
Let $C=k_1A\cup k_2B$, for positive integers $k_1, k_2$ such that $k_1\in\langle u_1, \ldots, u_q\rangle$.  Then  $\sC$ is a gluing of $\sA$ and $\sB$.
\end{proposition}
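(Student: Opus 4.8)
The plan is to follow the proof of Proposition~\ref{propGlueBinA} as closely as possible, the one genuinely new feature being that $I_B$ is no longer zero and must be used to reconcile the $y$-part of an arbitrary binomial of $I_C$ with a power of the $y$-monomial occurring in the gluing binomial $\rho$. By the Remark (cf.\ the proof of Proposition~\ref{propGlueBinA}), we may assume $\gcd(k_1,k_2)=1$, and we keep the normalization $\gcd(b_1,\ldots,b_n)=1$ made at the start of the section. Since $\b\in\sA$, write $b_i=\sum_{j=1}^p a_{ij}d_j$ with $d_j\in\N$; since $k_1\in\langle u_1,\ldots,u_q\rangle$, write $k_1=\sum_{i=1}^q c_iu_i$ with $c_i\in\N$. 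First I would produce the candidate gluing binomial
$$\rho=\prod_{i=1}^q y_i^{c_i}-\prod_{j=1}^p x_j^{k_2d_j},$$
and check that $\phi_C(\rho)=0$, both monomials mapping to $t^{k_1k_2\b}$; thus $\rho\in I_C$. I would also record that for each $m\in\N$ the binomial $\rho^{[m]}:=\prod_i y_i^{mc_i}-\prod_j x_j^{mk_2d_j}$ is a multiple of $\rho$, hence lies in $\langle\rho\rangle$. The inclusion $I_A+I_B+\langle\rho\rangle\subseteq I_C$ being clear, everything reduces to proving the reverse inclusion.

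For the reverse inclusion I would take an arbitrary binomial $w=\prod_j x_j^{\sigma_j}\prod_i y_i^{\mu_i}-\prod_j x_j^{\tau_j}\prod_i y_i^{\nu_i}\in I_C$ (such binomials generate $I_C$), put $\omega_1=\sum_i\mu_iu_i$ and $\omega_2=\sum_i\nu_iu_i$, and assume after a harmless swap that $\omega_1\geq\omega_2$. Writing out $\phi_C(w)=0$ gives the identity
$$k_2(\omega_1-\omega_2)\,\b=k_1\sum_{j=1}^p(\tau_j-\sigma_j)\a_j .$$
The crux of the argument is the divisibility that follows from it: the right-hand side lies in $k_1\Z^n$, so $k_1$ divides $k_2(\omega_1-\omega_2)b_l$ for every $l$, hence $k_1\mid k_2(\omega_1-\omega_2)$ because $\gcd(b_1,\ldots,b_n)=1$, hence $k_1\mid\omega_1-\omega_2$ because $\gcd(k_1,k_2)=1$. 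Write $\omega_1-\omega_2=\ell k_1$ with $\ell\in\N$. Cancelling $k_1$ in the identity above and substituting $\b=\sum_j d_j\a_j$ then shows precisely that
$$g_A:=\prod_j x_j^{\sigma_j+k_2\ell d_j}-\prod_j x_j^{\tau_j}\in I_A .$$

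It remains to assemble the pieces using $I_B$. The two $y$-monomials $\prod_i y_i^{\mu_i}$ and $\prod_i y_i^{\nu_i}\bigl(\prod_i y_i^{c_i}\bigr)^{\ell}=\prod_i y_i^{\nu_i+\ell c_i}$ have the same $u$-weighted degree $\omega_1=\omega_2+\ell k_1$; since $\b\neq 0$, they therefore lie in the same fibre of $\phi_B$, that is, $\prod_i y_i^{\mu_i}-\prod_i y_i^{\nu_i+\ell c_i}\in I_B$. Hence, modulo $I_B$, $w$ is congruent to $\bigl(\prod_i y_i^{\nu_i}\bigr)\bigl(\prod_i y_i^{\ell c_i}\prod_j x_j^{\sigma_j}-\prod_j x_j^{\tau_j}\bigr)$. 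Finally I would expand $\prod_i y_i^{\ell c_i}\prod_j x_j^{\sigma_j}-\prod_j x_j^{\tau_j}=\bigl(\prod_j x_j^{\sigma_j}\bigr)\rho^{[\ell]}+g_A$, whose first summand lies in $\langle\rho\rangle$ and whose second summand is the element of $I_A$ found above; multiplying by $\prod_i y_i^{\nu_i}$ keeps it in $I_A+\langle\rho\rangle$. Therefore $w\in I_A+I_B+\langle\rho\rangle$, as required. The two degenerate subcases are consistent with this: if $w$ involves no $y$ then $\ell=0$ and $w=g_A\in I_A$, while if $w$ involves no $x$ then $k_2\ell\,\b=0$ forces $\ell=0$ and $w\in I_B$.

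The step I expect to be the real obstacle --- conceptually rather than computationally --- is the divisibility $k_1\mid\omega_1-\omega_2$, since it is exactly what legitimizes absorbing the $y$-degree mismatch into $I_B$, and it is the only point at which all three hypotheses are genuinely needed: $k_1\in\langle u_1,\ldots,u_q\rangle$ is what makes it possible to write $\rho$ down at all, while $\gcd(k_1,k_2)=1$ together with $\gcd(b_1,\ldots,b_n)=1$ is precisely what forces $k_1$ to divide the gap $\omega_1-\omega_2$. Once this is in hand, the remainder is the same bookkeeping with multiples of $\rho$ as in Proposition~\ref{propGlueBinA}.
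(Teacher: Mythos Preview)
Your proof is correct and follows essentially the same route as the paper's. The only organizational difference is that the paper isolates the divisibility step $k_1\mid\omega_1-\omega_2$ into a preliminary lemma (Lemma~\ref{lemIntegerS}, defining the ``level'' $s(w)$ of a binomial) and then introduces an auxiliary binomial $\theta$ to mediate between $w$ and $\rho^{[s(w)]}$, whereas you carry out the same computation inline and reduce modulo $I_B$ first before peeling off $\rho^{[\ell]}$ and $g_A$; the underlying algebra is identical.
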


Note that, as we saw in the introduction, one can assume without loss of generality that $k_1$ and $k_2$ are relatively prime.
Before we can prove this proposition, we will show an easy preliminary result.
Let's start with some notation. As in the proof of theorem \ref{thmGlue1elt}, 
given an element $\alpha\in\Z^r$, denote 
$\alpha_+=\{j,\ 1\leq j\leq r\,/\ \alpha_j>0\}$ and $\alpha_-=\{j,\ 1\leq j\leq r\,/\ \alpha_j<0\}$.
Then, a binomial in $R=k[x_1,\ldots,x_p,y_1,\ldots,y_q]$ which is the difference between two monomials of disjoint supports
is always of the form
$$
\prod_{j\in \alpha_+}y_j^{\alpha_j} \prod_{j\in \beta_-} x_j^{-\beta_j}-
           \prod_{j\in \alpha_-}y_j^{-\alpha_j}\prod_{j\in \beta_+}x_j^{\beta_j}
$$
for some $\alpha\in\Z^q$ and $\beta\in\Z^p$.

\begin{lemma}\label{lemIntegerS}
Given a binomial 
$w = \prod_{j\in \alpha_+}y_j^{\alpha_j} \prod_{j\in \beta_-} x_j^{-\beta_j}-
           \prod_{j\in \alpha_-}y_j^{-\alpha_j}\prod_{j\in \beta_+}x_j^{\beta_j}$
in $R$, one has that
$w\in I_C$ \iff{} there exists $s\in\Z$ such that
$$
\sum_{j=1}^{q} \alpha_j u_j=k_1s
\quad\hbox{and}\quad 
A\cdot \beta=k_2s\b\,.
$$
\end{lemma}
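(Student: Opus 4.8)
The plan is to translate membership in $I_C$ into the condition $\phi_C(w)=0$ and then exploit the rank-1 structure of $B$. Recall that $C=k_1A\cup k_2B$, so for the $x$-variables one has $\phi_C(x_j)=t^{k_1\a_j}$ and for the $y$-variables $\phi_C(y_j)=t^{k_2 u_j\b}$, using $\b_j=u_j\b$. Since $I_C=\ker\phi_C$, the binomial $w$ lies in $I_C$ if and only if its two monomials are sent to the same Laurent monomial in the $t_i$, i.e.
$$
\sum_{j\in\alpha_+}\alpha_j\,(k_2 u_j\b)+\sum_{j\in\beta_-}(-\beta_j)(k_1\a_j)
=\sum_{j\in\alpha_-}(-\alpha_j)(k_2 u_j\b)+\sum_{j\in\beta_+}\beta_j(k_1\a_j)
$$
as elements of $\Z^n$. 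Moving everything to one side and using the notation $\alpha_+,\alpha_-,\beta_+,\beta_-$, this is exactly
$$
k_2\Bigl(\sum_{j=1}^{q}\alpha_j u_j\Bigr)\b = k_1\, A\cdot\beta
\quad\text{in }\Z^n.
$$
So the first step is to record that $w\in I_C$ if and only if $k_2\bigl(\sum_j\alpha_j u_j\bigr)\b=k_1\,A\cdot\beta$ in $\Z^n$.

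The second step is to deduce from this single vector equation the existence of the integer $s$. Set $m=\sum_{j=1}^q\alpha_j u_j\in\Z$ and $v=A\cdot\beta\in\Z^n$, so the equation reads $k_2 m\,\b=k_1 v$. Looking at any coordinate $i$ with $b_i\neq 0$ (such an $i$ exists unless $\b=0$, a degenerate case one can dispose of separately), we get $k_1\mid k_2 m b_i$ for every $i$; since $\gcd(b_1,\dots,b_n)=1$ and, after the reduction in the Remark preceding the section, we may take $k_1$ and $k_2$ relatively prime, it follows that $k_1\mid m$. Write $m=k_1 s$. Substituting back gives $k_1 k_2 s\,\b=k_1 v$, hence $v=k_2 s\,\b$, i.e. $A\cdot\beta=k_2 s\b$ and $\sum_j\alpha_j u_j=k_1 s$, which is the forward direction. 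The converse is immediate: if such an $s$ exists then $k_2(\sum_j\alpha_j u_j)\b=k_2 k_1 s\b=k_1(A\cdot\beta)$, so the vector equation of step one holds and $w\in I_C$.

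The only real subtlety is the divisibility argument that produces $s$, and in particular the hypothesis needed to run it: one must know that $\gcd(k_1,k_2)=1$ and that $\gcd(b_1,\dots,b_n)=1$, both of which are available by the normalizations already fixed at the start of the section and by the Remark allowing $k_1,k_2$ to be taken coprime. I expect this to be the main (and essentially only) obstacle, and it is mild; the passage $\phi_C(w)=0\Leftrightarrow$ equality of exponent vectors is routine for binomials in a toric ideal. A small bookkeeping point to handle cleanly is the $\b=0$ case, where $B=\{0\}$ forces $\sum_j\alpha_j u_j$ arbitrary and $A\cdot\beta=0$; one checks directly that both sides of the stated equivalence then reduce to "$A\cdot\beta=0$", with $s$ free, so the lemma still holds.
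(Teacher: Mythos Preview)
Your proposal is correct and follows essentially the same route as the paper: translate $w\in I_C$ into the vector identity $k_2\bigl(\sum_j\alpha_ju_j\bigr)\b=k_1\,A\cdot\beta$, then use $\gcd(k_1,k_2)=1$ and $\gcd(b_1,\dots,b_n)=1$ to extract the integer $s$ via a divisibility argument, with the converse being an immediate substitution. The only difference is cosmetic: you frame things via $\phi_C$ and single out the $\b=0$ edge case, while the paper works directly with the coordinate equations, but the logic is identical.
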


\begin{proof}
The binomial $w$ is in $I_C$ \iff{} 
$\sum_{j=1}^{q} \alpha_j (k_2\b_j)=\sum_{j=1}^{p} \beta_j(k_1\a_j)$, i.e.,
\begin{equation}\label{eq2}
k_2b_i\sum_{j=1}^{q} \alpha_j u_j=k_1\sum_{j=1}^{p} \beta_ja_{ij},
\ \forall i=1,\ldots,n\,.
\end{equation}
Since we have assumed that $\gcd(k_1,k_2)=1$ and $\gcd(b_1,\ldots,b_n)=1$,  we deduce that $k_1$ has to divide $\sum_{j=1}^{q} \alpha_j u_j$:
there exists $s\in\Z$ such that $\sum_{j=1}^{q} \alpha_j u_j=k_1s$. Going back to (\ref{eq2}), one gets that 
$\sum_{j=1}^{p} \beta_ja_{ij}=k_2b_is$ for all $i$, i.e., $A\cdot \beta=k_2s\b$.

\medskip
Conversely, if  $\sum_{j=1}^{q} \alpha_j u_j=k_1s$ and $A\cdot \beta=k_2s\b$ for some $s\in\Z$, then
$\sum_{j=1}^{q} \alpha_j (k_2\b_j)= k_2(\sum_{j=1}^{q} \alpha_ju_j) \b= k_1k_2s\b$ and
$\sum_{j=1}^{p} \beta_j(k_1\a_j)= k_1\sum_{j=1}^{p} \beta_j\a_j=k_1A\cdot \beta=k_1k_2s\b$
and hence $w\in I_C$.
\end{proof}

\begin{definition}\label{defIntegerS}
{\rm
Given a binomial $w\in I_C$ as in lemma \ref{lemIntegerS}, we call the integer $s$ the {\it level} 
of $w$ and denote it by $s(w)$:
$s(w)=\frac{\sum_{j=1}^{q} \alpha_j u_j}{k_1}$.
}\end{definition}

On the other hand, since we have assumed in proposition \ref{propLine} that $\b\in\sA$ and $k_1\in\sS$, one has that:
\begin{itemize}
\item
$\b=\sum_{j=1}^{p}d_j\a_j$ for some $d_1,\ldots,d_q\in\N$;
\item
$k_1=\sum_{j=1}^{q}v_ju_j$ for some $v_1,\ldots,v_q\in\N$.
\end{itemize}

Then, $k_1k_2\b=
\sum_{j=1}^{q}v_j(k_2\b_j)=
\sum_{j=1}^{p}k_2d_j(k_1\a_j)
$, and hence, for $C=k_1 A\cup k_2 B$, one has that
the binomial
\begin{equation}\label{eqRho}
\rho=\prod_{j=1}^{q}y_j^{v_j}-\prod_{j=1}^{p}x_j^{d_j k_2}
\end{equation}
belongs to $I_C$.

\begin{remark}{\rm
The binomial $\rho$ in (\ref{eqRho}) has level 1 because 
$\sum_{j=1}^{q}v_ju_j=k_1$. 
Moreover, $A\cdot \begin{pmatrix} k_2d_1\\ \vdots\\ k_2d_p\end{pmatrix}=k_2\b$
by lemma \ref{lemIntegerS}.
}\end{remark}

We are now ready to prove proposition \ref{propLine}.

\begin{proof}[Proof of proposition \ref{propLine}]
We will show that $I_C\subset I_A+I_B+(\rho)$ where $\rho$ is the binomial defined in (\ref{eqRho}) since we already know that
the reverse inclusion holds. 

\medskip
Let $w = \prod_{j\in \alpha_+}y_j^{\alpha_j} \prod_{j\in \beta_-} x_j^{-\beta_j}-
           \prod_{j\in \alpha_-}y_j^{-\alpha_j}\prod_{j\in \beta_+}x_j^{\beta_j}$
be a binomial in $I_C$.
By lemma \ref{lemIntegerS}, one has that $\sum_{j=1}^{q} \alpha_j u_j=k_1s(w)$
and $A\cdot \beta=k_2s(w)\b$. 
Now, consider the binomial
$\theta= \prod_{j}y^{s(w)v_j} \prod_{j\in \beta_-} x_j^{-\beta_j}-\prod_{j\in \beta_+}x_j^{\beta_j}$. 
One has that $\theta  \in I_C$ by lemma \ref{lemIntegerS}
because 
$\sum_{j=1}^{q}s(w)v_ju_j=k_1s(w)$
and $A\cdot\beta= s(w)k_2\b$. 

\medskip
For $\rho ^{[s(w)]}=\prod_{j=1}^{q}y_j^{s(w)v_j}-\prod_{j=1}^{p}x_j^{s(w)d_j k_2}$, that belongs to $I_C$ because it is a multiple of $\rho$,
one has that 
$\theta - \rho ^{[s(w)]}\prod_{j\in \beta_-}x_j^{-\beta_j} = 
\prod_{j=1}^px_j^{s(w)d_jk_2}\prod_{j\in \beta_-}x_j^{-\beta_j} -\prod_{j\in \beta_+}x_j^{\beta_j} \in I_A$.  
This shows that $\theta \in I_A+(\rho)$.

\medskip
Now, $w- \prod_{j\in \alpha_-}y_j^{-\alpha_j} \theta = 
\prod_{j\in \alpha_+}y^{\alpha_j} \prod_{j\in \beta_-} x_j^{-\beta_j}-
\prod_{j\in \alpha_-}y_j^{-\alpha_j} \prod_{j}y^{s(w)v_j} \prod_{j\in \beta_-} x_j^{-\beta_j}=
 \prod_{j\in \beta_-} x_j^{-\beta_j}
 (\prod_{j\in \alpha_+}y^{\alpha_j}-\prod_{j\in \alpha_-}y_j^{-\alpha_j} \prod_{j}y^{s(w)v_j})\in I_B$,
and hence $w- \prod_{j\in \alpha_-}y_j^{-\alpha_j} \theta \in I_B$.
This shows that $w\in I_A+I_B+(\rho)$ and we are done.
\end{proof}

Note that in proposition \ref{propLine}, there is no condition on $k_2$ and one can use $k_2$ if we have that $\b\notin\sA$ but $d\b\in\sA$ for some integer $d>1$. 
This shows that if a multiple of $\b$ belongs to $\sA$, then $\sA$ and $\sB$ can be glued.
As in the case of a simple split, this is indeed a characterization.

\begin{theorem}\label{thmLineChar}
If  the elements in $B$ lie on a line, i.e., $q\geq 2$ and there exists $\b\in\N^n$ such that $B=\b [u_1\ldots u_q]$
for some $u_1,\ldots,u_q\in\N$, 
then $\sA$ and $\sB$ can be glued \iff{} 
a multiple of $\b$ belongs to $\sA$, equivalently, \iff{}
the system $A\cdot X=d\b$ has a solution in $\N^p$ for some $d\in\N$.
\end{theorem}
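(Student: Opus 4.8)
The plan is to establish the equivalence of the three conditions by a cycle of implications, most of which are already essentially available in the excerpt. The chain of implications to close is: ``$\sA$ and $\sB$ can be glued'' $\Rightarrow$ ``a multiple of $\b$ is in $\sA$'' $\Rightarrow$ ``$A\cdot X=d\b$ has a solution in $\N^p$ for some $d\in\N$'' $\Rightarrow$ ``$\sA$ and $\sB$ can be glued''. The last two implications are routine: the middle and third conditions are literally the same statement (a solution $X\in\N^p$ of $A\cdot X=d\b$ is precisely a witness that $d\b=\sum_j X_j\a_j\in\sA$), and the implication from ``a multiple of $\b$ is in $\sA$'' back to gluability is exactly the content of proposition \ref{propLine}: if $d\b\in\sA$ for some $d\geq1$, choose any $k_1\in\sS=\langle u_1,\ldots,u_q\rangle$ coprime to $k_2$ and set $k_2$ to be a multiple of $d$ (so that $k_2\b\in\sA$ as well, via the same expression scaled by $k_2/d$); then for $C=k_1A\cup k_2B$, proposition \ref{propLine} applied with $k_2\b$ playing the role of ``$\b\in\sA$'' gives that $\sC$ is a gluing of $\sA$ and $\sB$. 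The remark following proposition \ref{propLine} already flags that $k_2$ carries exactly this flexibility, so this direction needs only to be spelled out.

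The one direction that requires genuine argument is the necessity: if $\sA$ and $\sB$ can be glued, then a multiple of $\b$ lies in $\sA$. Here I would argue exactly as in the simple-split case leading to equation (\ref{eq1}). Suppose for $C=k_1A\cup k_2B$ we have $I_C=I_A+I_B+\langle\rho\rangle$ with $\rho=\underline{x}^\alpha-\underline{y}^\beta$, $\alpha\in\N^p$, $\beta\in\N^q$. Since $\rho$ is a genuine binomial in the gluing (it must involve the $y$-variables nontrivially, otherwise $I_C=I_A+I_B$ would force $\sC$ to be a disjoint union of semigroups of strictly smaller rank, contradicting that $C$ generates $\sC$ — or more simply, $\langle\rho\rangle$ must contribute a relation linking the two blocks), we have $\beta\neq 0$. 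Applying $\phi_C$ to $\rho$ gives
$$
\sum_{j=1}^p \alpha_j\,k_1\a_j=\sum_{j=1}^q \beta_j\,k_2\b_j=k_2\Big(\sum_{j=1}^q\beta_j u_j\Big)\b.
$$
Set $m=k_2\sum_{j=1}^q\beta_j u_j$; since $k_2\geq 1$ and at least one $\beta_j>0$ with the corresponding $u_j$ — after the normalization $\gcd(u_1,\ldots,u_q)=1$ we may assume not all $u_j$ vanish, and in fact by relabelling that the $u_j$ in the support of $\beta$ are not all zero (if they were, $\rho$ would already lie in $I_B$) — we get $m\geq 1$. Then $m\b=\sum_j (\alpha_j k_1)\a_j\in\sA$, so a positive multiple of $\b$ is in $\sA$, which is the first condition. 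One then invokes theorem \ref{thmSimplesplit}'s elementary equivalence (or just re-derives it here) to rephrase ``$m\b\in\sA$'' as ``$A\cdot X=d\b$ solvable in $\N^p$''.

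The main obstacle — really the only subtle point — is making precise why $\rho$ must ``genuinely involve'' the $y$-variables, i.e. why $\beta\neq 0$ and why the monomial $\underline{y}^\beta$ cannot have all its exponents on variables $y_j$ with $u_j=0$. The cleanest way to handle this is to note that by the definition of gluing $\rho$ has its first monomial in the $x$-variables only and its second in the $y$-variables only, and that $\langle\rho\rangle$ must be nonzero and must not lie in $I_A+I_B$ (else $\rC\cong\rA\otimes_k\rB$ would have Krull dimension $\operatorname{rank}A+\operatorname{rank}B$, whereas $\dim\rC=\operatorname{rank}C\leq\operatorname{rank}A+1$ since $B$ has rank $1$; as $q\geq 2$ and the $\b_j$ are all multiples of $\b$, one checks $\operatorname{rank}C<\operatorname{rank}A+\operatorname{rank}B$ is impossible for a gluing, forcing $I_B\neq I_C\cap k[\underline y]$ unless... ) — actually the simplest route is: in any gluing $\rho$ is a nonzerodivisor-type binomial and in particular $\underline y^\beta\notin k$, so $\beta\neq0$; and since the $\b_j$ are nonzero (they generate a rank-$1$ semigroup, so $\b\neq 0$ and each $u_j$ that appears must be positive for $\b_j\neq0$ — discard any $u_j=0$, which would make $\b_j=0$ and the corresponding generator superfluous), every $j$ in the support of $\beta$ has $u_j>0$, hence $\sum_j\beta_j u_j>0$. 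I would present this normalization (all $u_j>0$, all generators nonzero) at the start of the proof, citing the paragraph preceding proposition \ref{propLine}, so that the computation above goes through cleanly.
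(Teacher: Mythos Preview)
Your argument is correct and matches the paper's proof: both obtain sufficiency directly from proposition \ref{propLine} (the paper simply takes $k_2=d$, so that $k_2B=(d\b)[u_1\cdots u_q]$ with $d\b\in\sA$), and both obtain necessity by reading off $k_2\big(\sum_j\beta_j u_j\big)\b\in\sA$ from the relation $\phi_C(\rho)=0$. Your extended worry about whether $\beta\neq0$ and $\sum_j\beta_ju_j>0$ is more cautious than the paper---which tacitly uses that semigroup generators are nonzero (hence every $u_j>0$) and that $\rho$ is a genuine two-block binomial---and can safely be compressed to a single normalizing remark.
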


\begin{proof}
If a multiple of $\b$ belongs to $\sA$, say $d\b\in\sA$, then by chosing $k_2=d$ in proposition \ref{propLine}, one gets that $\sA$ and $\sB$ can be glued.
Conversely, if there exists positive integers $k_1, k_2$ such that for $C=k_1A\cup k_2B$, $\sC$ is a gluing of $\sA$ and $\sB$, then
one has a binomial $\rho\in I_C$ of the form $\rho=y_1^{\beta_1}\cdots y_q^{\beta_q}-x_1^{\alpha_1}\cdots x_p^{\alpha_p}$. Thus,
$$
\beta_1k_2\b_1+\cdots+\beta_qk_2\b_q=\alpha_1k_1\a_1+\cdots+\alpha_pk_1\a_p
$$
and since $\b_j=u_j\b$ for all $j=1,\ldots,q$, one gets that $k_2(\sum_{j=1}^{q}\beta_j u_j)\b\in\sA$.
\end{proof}

Using essentially the same argument as in corollary \ref{corCMchar1elt}, one can easily show the following:

\begin{corollary}\label{corCMcharLine}
Suppose that the elements in $B$ lie on a line and that $\sA$ and $\sB$ can be glued, and consider $\sC$, a gluing of $\sA$ and $\sB$. 
Then, $\dim{\rA}=\dim{\rC}$ and $\depth (\rA)=\depth (\rC)$.
In particular, $\rC$ is Cohen-Macaulay \iff{} $\rA$ is Cohen-Macaulay.
\end{corollary}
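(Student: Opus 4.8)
The plan is to mimic the proof of Corollary \ref{corCMchar1elt}, replacing the single added variable $y$ by the $q$ variables $y_1,\ldots,y_q$ and using Theorem \ref{thmLineChar} and Theorem \ref{thmJpaa} as the two inputs. First I would establish the dimension equality. Since $\sA$ and $\sB$ can be glued, Theorem \ref{thmLineChar} gives that a multiple of $\b$ belongs to $\sA$, so every column of $B$ (being a multiple of $\b$, which itself lies in the $\Q$-span of the columns of $A$) is in the column space of $A$ over $\Q$; hence $\rk(C)=\rk(A)$ where $C=k_1A\cup k_2B$. By the characterization of Krull dimension of a semigroup ring as the rank of the defining matrix (cited from \cite{sturm} at the start of section \ref{secProjDim}), this yields $\dim\rC=\dim\rA$.

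Next I would compare depths via the Auslander--Buchsbaum formula. The semigroup $\sB$ has all generators on a line, so $\rB\simeq k[\langle u_1,\ldots,u_q\rangle]$ is a numerical semigroup ring, which is Cohen--Macaulay of dimension $1$; by Auslander--Buchsbaum over $k[y_1,\ldots,y_q]$ this gives $\pd(\rB)=q-1$. By Theorem \ref{thmJpaa}~(\ref{projdim}), $\pd(\rC)=\pd(\rA)+\pd(\rB)+1=\pd(\rA)+q$. On the other hand $\rC=R/I_C$ with $R=k[x_1,\ldots,x_p,y_1,\ldots,y_q]$ having exactly $q$ more variables than $R_A=k[x_1,\ldots,x_p]$, so Auslander--Buchsbaum over $R$ and over $R_A$ respectively gives $\depth\rC=(p+q)-\pd(\rC)=(p+q)-\pd(\rA)-q=p-\pd(\rA)=\depth\rA$. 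Combining this with the dimension equality from the previous step, $\rC$ is Cohen--Macaulay, i.e. $\depth\rC=\dim\rC$, if and only if $\depth\rA=\dim\rA$, i.e. $\rA$ is Cohen--Macaulay.

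The only genuinely new point compared with Corollary \ref{corCMchar1elt} is the observation that $\pd(\rB)=q-1$ for a rank-one $B$, equivalently that $\depth\rB=1=\dim\rB$; everything else is the same bookkeeping. I expect no real obstacle here, since $\sB$ being a numerical semigroup up to the scaling by $\b$ makes $\rB$ Cohen--Macaulay automatically, and the rank computation in step one is immediate once a multiple of $\b$ is known to lie in $\sA$. If one wanted to avoid invoking Cohen--Macaulayness of numerical semigroup rings, one could instead note directly that $\rk(B)=1$ forces $\dim\rB=1$ and that for any one-dimensional semigroup ring $\depth\geq 1$ trivially (the ring is a domain), with $\depth\leq\dim$ always, giving $\depth\rB=1$ and hence $\pd\rB=q-1$ again.
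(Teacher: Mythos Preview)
Your proposal is correct and follows exactly the approach the paper intends: the paper does not spell out a proof but says to use ``essentially the same argument as in corollary \ref{corCMchar1elt}'', and the subsequent remark makes explicit the one extra ingredient you identified, namely that $\rB$ behaves like the numerical semigroup ring $k[\langle u_1,\ldots,u_q\rangle]$ and hence has $\dim\rB=\depth\rB=1$, giving $\pd(\rB)=q-1$. Your bookkeeping with Theorem~\ref{thmLineChar}, Theorem~\ref{thmJpaa}~(\ref{projdim}), and Auslander--Buchsbaum over $R$ and $R_A$ is precisely what the paper has in mind.
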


\begin{remark}{\rm
Note that in corollaries \ref{corCMchar1elt} and \ref{corCMcharLine}, the dimension and depth of $\rB$ are not involved.
The reason is that in the first case, $I_B=(0)\subset k[y]$, while in the second, $\sB$ behaves like the
associated numerical semigroup $\sD$. So  in both cases, $\dim{\rB}=\depth (\rB)=1$.
In particular, $\rB$ is always Cohen-Macaulay.
}\end{remark}

\begin{example}\label{exLineCM}
{\rm
Consider 
$
A=\{
\begin{pmatrix} 5\\0\end{pmatrix},\begin{pmatrix} 3\\2\end{pmatrix},\begin{pmatrix} 2\\3\end{pmatrix},\begin{pmatrix} 0\\5\end{pmatrix}\}
\subset\N^2
$
and $B=\b 
\begin{bmatrix}
11 & 17 & 25 & 19
\end{bmatrix}$
for $\b=\begin{pmatrix} 1\\1\end{pmatrix}$. Since $5\b\in\sA$, one has that $\sA$ and $\sB$ can be glued by theorem \ref{thmLineChar}.
Moreover, proposition \ref{propLine} tells us how to do it. Setting $S=\{11,17,25,19\}\subset\N$ and choosing, for example,
$k_1=28\in\sS$ and $k_2=5$, 
one has that for
$$
C=k_1A\cup k_2B=\{
\begin{pmatrix} 140\\0\end{pmatrix},\begin{pmatrix} 84\\56\end{pmatrix},\begin{pmatrix} 56\\84\end{pmatrix},\begin{pmatrix} 0\\140\end{pmatrix},
\begin{pmatrix} 55\\55\end{pmatrix},\begin{pmatrix} 85\\85\end{pmatrix},\begin{pmatrix} 125\\125\end{pmatrix},\begin{pmatrix} 95\\95\end{pmatrix}
\}\,,
$$
$\sC$ is a gluing of $\sA$ and $\sB$, and $I_C=I_A+I_B+\langle\rho\rangle$ for $\rho=x_1x_4-y_1y_2$.
The resolutions of $\rA$ and $\rB$ look as follows:
$$
0\rar R_A^2\rar R_A^3\rar \rA\rar 0 \quad\hbox{\rm and}\quad
0\rar R_B\rar R_B^5\rar R_B^5\rar \rB\rar 0
$$
and by theorem \ref{thmJpaa} (\ref{betti}), the resolution of $\rC$ is:
$$
0\rar R^2\rar R^{15}\rar R^{39}\rar R^{48}\rar R^{30}\rar R^9\rar \rC\rar 0\,.
$$
In this example, both $\rA$ and $\rC$ are Cohen-Macaulay ($\rB$ is also Cohen-Macaulay and degenerate as already observed). 
}\end{example}

\begin{example}\label{exLineNotCM}
{\rm
In example \ref{exLineCM}, if we substitute $A$ for 
$
A=\{
\begin{pmatrix} 5\\0\end{pmatrix},\begin{pmatrix} 4\\1\end{pmatrix},\begin{pmatrix} 1\\4\end{pmatrix},\begin{pmatrix} 0\\5\end{pmatrix}\}
\subset\N^2
$
and keep the rest of the data, we get a gluing of $\sA$ and $\sB$. The difference is that in this case neither $\rA$ nor $\rC$ are Cohen-Macaulay.
}\end{example}

\begin{example}\label{exLineIterated1elt}
{\rm
In example \ref{exLineCM}, if we change $k_1$ and take $k_1=2$ which is not in $\sS$, it is not a gluing. But in that case, 
it behaves like an iteration of simple splits, the situation discribed in section \ref{secSimplesplit}:
$I_C=I_A+\langle\rho_1\rangle+\langle\rho_2\rangle+\langle\rho_3\rangle+\langle\rho_4\rangle$ for
$\rho_1=y_1^2-x_1^{11}x_4^{11}$, $\rho_2=y_2-y_1x_1^3x_4^3$, $\rho_3=y_3-y_2x_1^4x_4^4$ and $\rho_4= y_4-y_2x_1x_4$.
The ideal $I_B=I_D$ is not involved here (even if it is, of course, contained in $I_C$) and $I_C$ does not coincide with $I_A+I_B+\langle\rho_1\rangle$.
}\end{example}

\begin{example}{\rm
If we now take $k_1=26$ in the same example, then $I_C$ is minimally generated by 13 binomials. One of them is $\rho=y_1^3y_4-x_1^2x_4^2$.
In this case, if $C=k_1A\cup k_2B$, $\sC$ is not a gluing ($I_C\neq I_A+I_B+\langle\rho\rangle$) and it is not either an iteration of simple splits.
}\end{example}

\section{A direct consequence: the case $n=2$}\label{secConsequences}

Putting all together, we can answer completely to question \ref{weakerquest} when $n=2$:

\begin{theorem}
Let $A=\{\a_1,\ldots,\a_p\}$ and $B=\{\b_1,\ldots,\b_q\}$ be two finite subsets of $\N^2$ such that $\rA$ and $\rB$ are Cohen-Macaulay. 
Then, $\sA$ and $\sB$ can be glued \iff{}
one of the two subsets, for example $B$, satisfies one of the following conditions:
\begin{itemize}
\item
either $B$ has one single element $\b$, i.e., $q=1$,
\item
or $q\geq 2$ and $B = \b \begin{bmatrix}u_1 & \ldots & u_q\end{bmatrix}$ for
some $u_1,\ldots,u_q\in\N$,
\end{itemize}
and  the system $A\cdot x=d\b$ has a solution in $\N^p$ for some $d\in\N$.

In this case,
if $\sC$ is a gluing of $\sA$ and $\sB$, then $\rC$ is Cohen-Macaulay.
\end{theorem}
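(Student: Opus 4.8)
This statement is a synthesis of the results of sections \ref{secProjDim}, \ref{secSimplesplit} and \ref{secLine}, so the plan is to assemble them, the only real work being the case analysis.

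For the direction $(\Leftarrow)$, I would start from the assumption that, after possibly interchanging the roles of $A$ and $B$, the subset $B$ is either a single element $\{\b\}$ (i.e., $q=1$), or satisfies $q\geq 2$ and $B=\b[u_1\ldots u_q]$ for some $u_1,\ldots,u_q\in\N$, and that moreover $A\cdot x=d\b$ has a solution in $\N^p$ for some $d\in\N$, equivalently a multiple of $\b$ lies in $\sA$. Theorem \ref{thmSimplesplit} (if $q=1$) or theorem \ref{thmLineChar} (if $q\geq 2$) then gives at once that $\sA$ and $\sB$ can be glued. For the Cohen-Macaulay part, I would take an arbitrary gluing $\sC$ of $\sA$ and $\sB$ and invoke corollary \ref{corCMchar1elt} (if $q=1$) or corollary \ref{corCMcharLine} (if $q\geq 2$): $\rC$ is Cohen-Macaulay if and only if $\rA$ is, and the latter holds by hypothesis. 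I would also point out that the Cohen-Macaulay hypothesis on $\rB$ is not needed here, since $\rB$ is automatically Cohen-Macaulay of dimension~$1$ in both cases, as observed in the remark after corollary \ref{corCMcharLine}.

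For the direction $(\Rightarrow)$, I would argue as follows. If $\sA$ and $\sB$ can be glued, then since $\rA$ and $\rB$ are both Cohen-Macaulay and $n=2$, theorem \ref{thmNondegen} prevents $\sA$ and $\sB$ from both being nondegenerate, so at least one of them — relabel it $B$ — is degenerate, i.e., the $2\times q$ matrix $B$ has rank less than~$2$. Its nonzero columns then all lie on a line through the origin, so $B$ has rank exactly~$1$: either $q=1$ and $B=\{\b\}$, or $q\geq 2$ and $B=\b[u_1\ldots u_q]$ for some $\b\in\N^n$ and $u_1,\ldots,u_q\in\N$. Finally, theorem \ref{thmSimplesplit} (when $q=1$) or theorem \ref{thmLineChar} (when $q\geq 2$) shows that the hypothesis that $\sA$ and $\sB$ can be glued implies that a multiple of $\b$ belongs to $\sA$, i.e., that $A\cdot x=d\b$ has a solution in $\N^p$ for some $d\in\N$.

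I do not expect a substantial obstacle: all the real content is in the preceding sections. The only steps needing attention are the reduction — using theorem \ref{thmNondegen} to force one of the two semigroups to be degenerate, and recording that in $\N^2$ a degenerate semigroup is exactly one whose generators lie on a line (with the single-element case as the limiting one) — and the remark that no separate analysis of $\rB$ is necessary for the Cohen-Macaulay conclusion, because a semigroup whose generators lie on a line always has a one-dimensional, hence Cohen-Macaulay, semigroup ring.
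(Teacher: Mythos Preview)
Your proposal is correct and follows essentially the same route as the paper's own proof: use theorem \ref{thmNondegen} to force degeneracy of one side, observe that in $\N^2$ degeneracy means rank~$1$ (the single-element case or the ``on a line'' case), and then invoke theorems \ref{thmSimplesplit}/\ref{thmLineChar} and corollaries \ref{corCMchar1elt}/\ref{corCMcharLine}. The only addition you make --- noting that the Cohen-Macaulay hypothesis on $\rB$ is superfluous for the ($\Leftarrow$) direction --- is already the content of the remark after corollary \ref{corCMcharLine}.
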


\begin{proof}
By theorem \ref{thmNondegen}, if $\sA$ and $\sB$ can be glued, then one of the two semigroups, for example $\sB$, has to be degenerate. 
Since $n=2$, $\sB$ is degenerate if and only if $\rk{B}=1$, i.e., either
$B=\{\b\}$ (simple split) or $q\geq 2$ and $B = \b \begin{bmatrix}u_1 & \ldots & u_q\end{bmatrix}$ for $u_1,\ldots,u_q\in\N$
(the elements in $B$ lie on a line). In both cases, $\sA$ and $\sB$ can be glued \iff{} a multiple of the corresponding vector $\b$ belongs to $\sA$ 
by theorems \ref{thmSimplesplit} and \ref{thmLineChar}, and the result follows.
Finally, it follows from corollaries \ref{corCMchar1elt} and \ref{corCMcharLine} that $\rC$ is Cohen-Macaulay when $\sC$ is a gluing of $\sA$ and $\sB$.  
\end{proof}


\begin{thebibliography}{99}


\bibitem{Sing}
 W. Decker, G.-M. Greuel, G. Pfister, H. Sch{\"o}nemann, 
\newblock {\sc Singular} {4-1-2} --- {A} computer algebra system for polynomial computations.
\newblock {http://www.singular.uni-kl.de} (2019).

\bibitem{De}{
C. Delorme,
Sous-mono\"{i}des d'intersection compl\`{e}te de $N$,
 {\em Ann. Sci. \'{E}cole Norm. Sup.} (4) {\bf 9} (1976), 145--154.}


\bibitem{jpaa19}{
P. Gimenez and H. Srinivasan,
The structure of the minimal free resolution of semigroup rings obtained by gluing,
{\em J. Pure Appl. Alg.} {\bf 223} (2019), 1411--1426.}


\bibitem{rosales}{
J. C. Rosales,
On presentations of subsemigroups of $\N^n$,
{\em Semigroup Forum} {\bf 55} (1997), 152--159.}


\bibitem{sturm} {
B. Sturmfels, {\em Gr\"obner bases and convex polytopes},
University Lecture Series {\bf 8}, Amer. Math. Soc., 1996.
}

\end{thebibliography}
\end{document}